\newtheorem{theorem}{Theorem}[section]
\newtheorem{lemma}[theorem]{Lemma}
\newtheorem{corollary}[theorem]{Corollary}
\theoremstyle{definition}
\newtheorem{definition}[theorem]{Definition}
\theoremstyle{remark}
\newtheorem{remark}[theorem]{Remark}
\numberwithin{equation}{section}
\newcommand{\RN }{\mathbb R^n}
\newcommand{\BMO}[0]{\operatorname{BMO}}
\newcommand{\abs}[1]{|#1|}
\newcommand{\Norm}[2]{\|#1\|_{#2}}
\newcommand{\strt}[1]{\rule{0pt}{#1}}
\newcommand{\norm}[1]{\mbox{$\left\| #1 \right\|$}}
\def\Xint#1{\mathchoice
  {\XXint\displaystyle\textstyle{#1}}%
  {\XXint\textstyle\scriptstyle{#1}}%
  {\XXint\scriptstyle\scriptscriptstyle{#1}}%
  {\XXint\scriptscriptstyle\scriptscriptstyle{#1}}%
  \!\int}
\def\XXint#1#2#3{{\setbox0=\hbox{$#1{#2#3}{\int}$}
    \vcenter{\hbox{$#2#3$}}\kern-.5\wd0}}
\def\avgint{\Xint-}
\newcommand{\vertiii}[1]{{\left\vert\kern-0.25ex\left\vert\kern-0.25ex\left\vert #1 
    \right\vert\kern-0.25ex\right\vert\kern-0.25ex\right\vert}}
\numberwithin{equation}{section}
\begin{document}

\title{A note on generalized Fujii-Wilson conditions and $\BMO$ spaces}

\author[S. Ombrosi]{Sheldy Ombrosi}
\address[Sheldy Ombrosi] {Instituto de Matem\'atica de Bah\'{\i}a Blanca (INMABB), Departamento de Matem\'atica, Universidad Nacional del Sur (UNS) - CONICET, Av. Alem 1253, Bah\'{\i}a Blanca, Argentina}
\email{sombrosi@uns.edu.ar}

\author[C. P\'erez]{Carlos P\'erez}
\address[Carlos P\'erez]{ Department of Mathematics, University of the Basque Country, IKERBASQUE 
(Basque Foundation for Science) and
BCAM \textendash  Basque Center for Applied Mathematics, Bilbao, Spain}
\email{cperez@bcamath.org}

\author[E. Rela]{Ezequiel Rela}
\address[Ezequiel Rela]{Departamento de Matem\'atica,
Facultad de Ciencias Exactas y Naturales, Universidad de Buenos Aires, Ciudad Universitaria Pabell\'on I, Buenos Aires 1428 Capital Federal Argentina} \email{erela@dm.uba.ar}

\author[I.P. Rivera-R\'{\i}os]{Israel P. Rivera-R\'{\i}os}
\address[Israel P. Rivera-R\'{\i}os] {Instituto de Matem\'atica de Bah\'{\i}a Blanca (INMABB), Departamento de Matem\'atica, Universidad Nacional del Sur (UNS) - CONICET, Av. Alem 1253, Bah\'{\i}a Blanca, Argentina}
\email{israel.rivera@uns.edu.ar}

\thanks{C. P. This work was supported by the Spanish Ministry of Economy and Competitiveness, MTM2017-82160-C2-2-P and SEV-2017-0718}
\thanks{E. R. is partially supported by grants UBACyT 20020170200057BA and PICT-2015-3675.}
\thanks{S. O. and I. P. R.-R. are supported by grant  PIP (CONICET) 11220130100329CO}

\subjclass{Primary: 42B25. Secondary: 43A85.}

\keywords{Muckenhoupt weights, BMO}

\begin{abstract}
In this note we generalize the definition of Fujii-Wilson condition providing quantitative characterizations of some interesting classes of weights, such as $A_\infty$, $A_\infty^{weak}$ and $C_p$, in terms of  $\BMO$ type spaces suited to them. We will provide as well some self improvement properties for some of those generalized $\BMO$ spaces and some quantitative estimates for Bloom's $\BMO$ type spaces.
\end{abstract}

\maketitle

\section{Introduction and main results}

Given a weight $v$, namely, non-negative locally integrable function in $\RN$,  and a functional $Y:\mathcal{Q}\rightarrow (0,\infty)$ defined over the family of all cubes  in $\RN$ with sides parallel to the axes, we define the class of functions $\BMO_{v,Y}$ by  
$$
\BMO_{v,Y}=  \{f \in L_{loc}^{1}(\RN):  \Norm{f}{\BMO_{v,Y}}<\infty \}
$$
where 
\begin{equation*}
  \Norm{f}{\BMO_{v,Y}}:=\sup_Q\frac{1}{Y(Q)}\int_Q\abs{f-f_{Q} }v<\infty,
\end{equation*}
and, as usual, $f_Q= \frac{1}{|Q|}\int_Q f$ denotes the average of $f$ over $Q$. 
In the case that $Y(Q)=v(Q)$ for every cube $Q$, a classical result due to Muckenhoupt and Wheeden in \cite{MW} asserts that
$$
\BMO = \BMO_{v,v}
$$
holds whenever $v\in A_{\infty}$. 
  Also  the case of $Y(Q)=w(Q)$ for some weight  $w$ and $v=1$  was considered in \cite{MW} and independently by J. Garc\'{\i}a-Cuerva  \cite{GC} in the context of Hardy spaces\footnote{Results involving that space appear in an abstract of that author in Notices of the AMS Feb 1974 p.A-309.}. Later on, S. Bloom's \cite{Bloom} also considered this special case in the context of commutators and used the notation  $\BMO_w$ to denote the space $\BMO_{1,w}$. We remit the reader to  \cite{GCHST, HLW, LORRAdv, LORR, HComm2, AMPRR} for the latest advances and related results in that direction. The unweighted case $v=1$ and $Y(Q)=|Q|$ corresponds, obviously, to the classical $\BMO$ space of John-Nirenberg \cite{JN} and in that case we shall drop the subscripts. 

More recently it was established that $I:\BMO \hookrightarrow \BMO_{w,w}$ is bounded with norm at most $c_n[w]_{A_{\infty}}$, namely 
\begin{equation}\label{BMO-BMOw}
\Norm{f}{\BMO_{w,w}} \leq c_n\, [w]_{A_{\infty}}\,\Norm{f}{\BMO}
\end{equation}
where  $[w]_{A_{\infty}}$ denotes the Fujii-Wilson constant defined by
\begin{equation}\label{FW}
[w]_{A_\infty}:=\sup_Q\frac{1}{w(Q)}\int_Q M(w\chi_{\strt{1.7ex} Q}),
\end{equation}
and the dependence on $[w]_{A_\infty}$ is sharp. Definition \eqref{FW} was introduced in \cite{HP} where estimate \eqref{BMO-BMOw} was obtained combining the classical John-Nirenberg theorem and the optimal reverse H\"older inequality obtained as well in \cite{HP}, namely,  if $w \in A_{\infty}$ and if 
$$r(w):=1+\frac{1}{ \tau_n \,[w]_{A_{\infty}}}$$ 
then,
\begin{equation} \label{RHI}
  \Big(\avgint_Q w^{r(w)}\Big)^{\frac{1}{r(w)} }\leq 2\avgint_Q w.
\end{equation}
with $\tau_n$ a dimensional constant that we may take to be $\tau_n \approx 2^{n}$.

Up until now it was not known whether \eqref{BMO-BMOw} would still be true for general weights, in other words, 
whether the emdedding $I:\BMO \hookrightarrow \BMO_{w,w}$ would hold for weights beyond the $A_{\infty}$ class. Our first result gives a negative answer to that question  providing a new quantitative characterization of $A_\infty$ in terms of $[w]_{A_\infty}$ and the $\BMO$ class. We recommend \cite{DMRO-Ainfty} for a detailed account on characterizations of $A_\infty$. Actually, we are going to present a more general result that encloses the aforementioned results as particular cases. For that purpose we introduce here the following variation of the Fujii-Wilson $A_\infty$ constant \eqref{FW}.

\begin{definition}
Let $v$ be a weight and  $Y:\mathcal{Q}\rightarrow (0,\infty)$ a functional defined over the family of all cubes in $\RN$. We define
\begin{equation}\label{eq:twoweight-Ainfty}
[v]_{A_{\infty,Y}}=\sup_{Q}\frac{1}{Y(Q)}\int_QM(v\chi_{\strt{1.7ex} Q}).
\end{equation}
When the supremum above is finite, we say that  $v\in A_{\infty,Y}$.
\end{definition} 
We include here some examples that motivate this definition and that will be used later on. 

\begin{enumerate}
\item $Y(Q):=w(Q)$. This corresponds to the $A_\infty$ class of weights. See Section \ref{sec:Ainfty} for a more detailed discussion on this subject.\\
\item $Y(Q):=w(2Q)$. Associated to this functional is the so called weak $A_\infty$ class of weights and will be treated on Section \ref{sec:WeakAinfty}.\\
\item $Y(Q):= \displaystyle\int_{\RN}M(\chi_{\strt{1.7ex}Q} )^pw$.  Associated to the $C_p$ class of weights, see Section \ref{sec:Cp}.\\
\item $Y(Q)=w_r(Q):=|Q|\,\displaystyle\left( \frac{1}{|Q|}\int_{Q }w^{r}dx\right)^{1/r}$, $1<r<\infty$. Also related to the $A_\infty$ class which  produces more precise estimates as shown in Corollary \ref{AinftyBMO}.

\end{enumerate}

\begin{theorem}  \label{CharactAinfty}
Let $v$ be a weight. There exist some dimensional constants $c_n, C_n$ independent of $v$ such that
\begin{equation}\label{eq:BMO-Ainfty-twoinequalituies}
c_n \ [v]_{A_{\infty,Y}} \le \sup_{b: \Norm{b}{\BMO}=1}\|b\|_{\BMO_{v,Y}} \le C_n \ [v]_{A_{\infty,Y}}.
\end{equation}
In other words, we have that
\begin{equation*} 
[v]_{A_{\infty,Y}} \approx \sup_{b: \Norm{b}{\BMO}=1}\sup_{Q} \frac{1}{Y(Q)}\int_Q|b(x)-b_{\strt{1.7ex}Q}| \,vdx
\end{equation*}
\end{theorem}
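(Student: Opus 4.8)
The plan is to prove the two inequalities in \eqref{eq:BMO-Ainfty-twoinequalituies} separately, the upper bound being the more delicate one.

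\textbf{Upper bound.} Fix a cube $Q$ and a function $b$ with $\Norm{b}{\BMO}=1$. I want to estimate $\frac{1}{Y(Q)}\int_Q|b-b_Q|\,v$ by a dimensional multiple of $[v]_{A_{\infty,Y}}$. The natural device is a Calder\'on--Zygmund / good-$\lambda$ type argument comparing the oscillation of $b$ on $Q$ with the maximal function of $v\chi_Q$. Concretely, I would use the local John--Nirenberg inequality on $Q$: there are dimensional constants $c_1,c_2>0$ such that for all $\lambda>0$,
\begin{equation*}
|\{x\in Q:\ |b(x)-b_Q|>\lambda\}|\le c_1 e^{-c_2\lambda/\Norm{b}{\BMO}}|Q|.
\end{equation*}
Then I write $\int_Q|b-b_Q|v = \int_0^\infty v(\{x\in Q:|b-b_Q|>\lambda\})\,d\lambda$ and I need to bound $v$ of the super-level set of $b$ by something controlled by $Y(Q)$. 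The key geometric point is that the level set $E_\lambda=\{x\in Q:|b-b_Q|>\lambda\}$ satisfies $|E_\lambda|\le c_1 e^{-c_2\lambda}|Q|$, so it is a small portion of $Q$, and for any measurable $E\subset Q$ with $|E|\le \alpha|Q|$ one has the elementary inequality $v(E)\le \big(\text{something involving } M(v\chi_Q)\big)$. Precisely, for $x\in Q$, $\inf_{x\in Q}M(v\chi_Q)(x)\ge \langle v\rangle_Q$, and more usefully, if $|E|\le \alpha |Q|$ then there is a Calder\'on--Zygmund-type estimate $\frac{v(E)}{|E|}\lesssim \frac{1}{|E|}\int_{E}M(v\chi_Q)$; combining this with $\int_{E_\lambda}M(v\chi_Q)\le \int_Q M(v\chi_Q)\le [v]_{A_{\infty,Y}}Y(Q)$ after summing the geometric series in $\lambda$ should give
\begin{equation*}
\int_Q|b-b_Q|v \le C_n\,[v]_{A_{\infty,Y}}\,Y(Q).
\end{equation*}
I expect the cleanest route is actually the one used for \eqref{BMO-BMOw} in \cite{HP}: decompose $b-b_Q$ dyadically via the level sets and on each annulus $\{2^{k}<|b-b_Q|\le 2^{k+1}\}\cap Q$ use that its measure is $\lesssim e^{-c2^k}|Q|$ together with the pointwise bound, then sum.

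\textbf{Lower bound.} This is the easier direction and amounts to exhibiting, for each cube $Q$, a test function $b_Q$ with $\Norm{b_Q}{\BMO}\le C_n$ for which $\frac{1}{Y(Q)}\int_Q|b_Q-(b_Q)_Q|v \gtrsim \frac{1}{Y(Q)}\int_Q M(v\chi_Q)$. The standard choice is $b_Q(x)=\log M(v\chi_Q)(x)$ (truncated if necessary), whose $\BMO$ norm is bounded by a dimensional constant because $M(v\chi_Q)$ is an $A_1$ weight with dimensional constant, hence $\log M(v\chi_Q)\in\BMO$ with controlled norm (by the Coifman--Rochberg theorem). One then needs the reverse inequality $\int_Q |\log M(v\chi_Q) - \langle \log M(v\chi_Q)\rangle_Q|\,v \gtrsim$ something comparable to $\int_Q M(v\chi_Q)$ up to the scale fixed by $\langle v\rangle_Q$; alternatively, and perhaps more robustly, take $b(x)=\log\frac{M(v\chi_Q)(x)}{\langle v\rangle_Q}$ and use $M(v\chi_Q)\ge \langle v\rangle_Q$ on $Q$ so that $b\ge 0$ there, together with a lower bound for $\int_Q b\,v$. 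Since this should hold with a fixed test function \emph{per cube}, taking the supremum over $b$ with $\Norm{b}{\BMO}=1$ (after normalizing) and then over $Q$ yields the left inequality.

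\textbf{Main obstacle.} The delicate point is the upper bound, specifically the passage from ``$E$ is a small fraction of $Q$ in Lebesgue measure'' to ``$v(E)$ is controlled by $\int_E M(v\chi_Q)$ with the right constant,'' uniformly in the functional $Y$. Since $Y$ is essentially arbitrary (only $Y>0$ is assumed), \emph{all} of the weight dependence must be routed through the quantity $\int_Q M(v\chi_Q)\le [v]_{A_{\infty,Y}}Y(Q)$; one cannot invoke a reverse H\"older inequality for $v$ itself as in the $A_\infty$ case, so the argument must be purely measure-theoretic via the maximal function and John--Nirenberg, with the exponential decay of the level sets doing the work of summability. Making the geometric series converge with a clean dimensional constant $C_n$, and ensuring the John--Nirenberg constants are genuinely dimensional (localized to $Q$, independent of $v$ and $Y$), is where the care is needed.
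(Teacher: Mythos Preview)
Your upper-bound argument has a genuine gap. Writing $\int_Q|b-b_Q|\,v=\int_0^\infty v(E_\lambda)\,d\lambda$ and invoking John--Nirenberg gives exponential decay of $|E_\lambda|$, but you need decay of $v(E_\lambda)$. The estimate you propose, $v(E_\lambda)\le\int_{E_\lambda}M(v\chi_Q)\le\int_Q M(v\chi_Q)\le [v]_{A_{\infty,Y}}Y(Q)$, is correct but is \emph{independent of $\lambda$}, so the layer-cake integral diverges and there is no geometric series to sum. Since no hypothesis whatsoever is placed on $v$ or $Y$, Lebesgue smallness of $E_\lambda$ cannot be upgraded to $v$-smallness; the only available control is the single global quantity $\int_Q M(v\chi_Q)$. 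What is missing is the \emph{structure} of the John--Nirenberg level sets: they are unions of nested Calder\'on--Zygmund cubes forming a sparse family $\mathcal S\subset\mathcal D(Q)$, and the pointwise bound $|b-b_Q|\chi_Q\lesssim\|b\|_{\BMO}\sum_{P\in\mathcal S}\chi_P$ (this is the paper's Lemma~\ref{LemBabyLerner}) lets one write
\[
\int_Q|b-b_Q|\,v\ \lesssim\ \|b\|_{\BMO}\sum_{P\in\mathcal S}v(P)\ \lesssim\ \|b\|_{\BMO}\sum_{P\in\mathcal S}\frac{v(P)}{|P|}\,|E_P|\ \le\ \|b\|_{\BMO}\int_Q M(v\chi_Q),
\]
the last step using disjointness of the $E_P$. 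This is the paper's route, and it is short; contrary to your assessment, the upper bound is the \emph{easier} half.

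Your lower-bound sketch is on the right track and matches the paper's idea: test with $b=\log\big(M(v\chi_Q)/v_Q\big)^{1/2}$, whose $\BMO$ norm is dimensional by Coifman--Rochberg. Two ingredients you have not supplied are (i) the preliminary bound $v(Q)\lesssim X\,Y(Q)$ (obtained by testing with $b=\chi_{\tilde Q}$ for a half-measure subcube $\tilde Q$), needed to handle the ``$+1$'' in the Stein identity $\int_Q M(v\chi_Q)\simeq\int_Q\big(1+\log^+(v/v_Q)\big)v$, and (ii) the pointwise lower bound $|b(x)-b_Q|\ge\frac12\log^+\!\big(v(x)/(\alpha_n^2 v_Q)\big)$ on the set where $v\ge\alpha_n^2 v_Q$, which requires controlling $b_Q$ from above via Jensen and Kolmogorov. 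These are the steps that make the lower bound the more delicate direction.
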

We will present some interesting corollaries of this result in  Section \ref{Sec:Cor}.
We observe that no condition is assumed on the functional $Y:\mathcal{Q}\to (0,\infty)$ nor on the weight in the theorem above. However, for the next theorem we need to restrict ourselves to a special class of functionals $Y$. Before introducing this special class of functionals we recall the notion of $L$-smallness that was introduced in \cite{PR-Poincare} within the context of generalized Poincar\'e inequalities.
\begin{definition}\label{def:L-small}
We say that a family of pairwise disjoint subcubes $\{Q_i\}$ contained in a cube $Q$ is $L$-small if 
\begin{equation}\label{eq:L-small}
\sum_{i}|Q_i|\le \frac{|Q|}{L}.
\end{equation}
In that case we say that $\{Q_i\}\in S_Q(L)$ or if the cube $Q$ is clear by the context that $\{Q_i\}\in S(L)$.
\end{definition} 
This condition arises tipically when considering the Calder\'on-Zygmund decomposition of the level sets of a non-negative function $f$ in a given cube $Q$ at level $L>1$ under the assumption $\avgint_Q f=1$. 

We define now the following  class of functionals.
\begin{definition}\label{def:Y}
We say that a functional $Y\in \mathcal{Y}_q$ for $q>1$ if there exists $c>0$ such that for any cube $Q$ and any family $\Lambda$ of pairwise disjoint subcubes of $Q$ with $\Lambda\in S(L)$, the following inequality holds
\begin{equation}\label{DefY(Q)}
\sum_{P\in\Lambda} Y(P) 
\leq  c\, Y(Q)   \left (\frac{1}{L}\right )^{\frac{1}{q}},
\end{equation}
and we will denote by $\beta_{\strt{1.7ex}Y}$ the smallest of the  constants $c$. 
\end{definition}

\begin{remark}\label{def wr}
Our model example in this class is given by
\begin{equation}\label{wr}
Y(Q)=w_r(Q):=\left( \frac{1}{|Q|}\int_{Q }w^{r}dx\right)^{1/r} \,|Q|= \left(\int_{Q}w^{r}dx\right)^{1/r} \,|Q|^{1/r'}
\end{equation}
where $r>1$.  H\"older's inequality yields \eqref{DefY(Q)} with  constant $\beta_{\strt{1.7ex}Y}\leq 1$ and exponent $q=r'$. 
\end{remark}
We can now state the following theorem.
\begin{theorem} \label{thm:GenAssymBMO}  Let $w$  be a 
weight and let $Y\in \mathcal{Y}_q$, $q>1$, namely a functional satisfying \eqref{DefY(Q)}. Suppose further that there is a constant $c$ such that for any cube $Q$
$$
w(Q)\leq    Y(Q).
$$ 
Then if $f\in \BMO$, there is  dimensional constant $c$ such that  for each cube $Q$, 
\begin{equation*}
\left(\frac{1}{ Y(Q)  } \int_{ Q }   |f -f_{Q}|^p     \,wdx\right)^{1/p}\, \leq c\, p\,q\,  \beta_{\strt{1.7ex}Y}\|f\|_{\BMO}
\end{equation*}
\end{theorem}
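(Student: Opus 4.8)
The plan is to run a Calder\'on--Zygmund stopping time argument on the level sets of $|f-f_Q|$ inside a fixed cube $Q$, combined with the John--Nirenberg exponential decay and the structural hypothesis $Y\in\mathcal Y_q$. Normalizing so that $\|f\|_{\BMO}=1$, the classical John--Nirenberg inequality gives $|\{x\in Q:|f(x)-f_Q|>\lambda\}|\le c_n e^{-c_n\lambda}|Q|$. The idea is to write the left-hand integral using the distribution function and to dyadically decompose the large-value region: for $k\ge 0$ consider the Calder\'on--Zygmund cubes $\{Q_i^k\}$ maximal with $\avgint_{Q_i^k}|f-f_Q|>\lambda_k$ for a geometrically increasing sequence $\lambda_k\sim k$ (or $\lambda_k\sim 2^k$). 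The key point is that John--Nirenberg forces these families to be $L_k$-small with $L_k$ growing like $e^{c_n\lambda_k}$, so that the hypothesis $Y\in\mathcal Y_q$ applies with a strong gain: $\sum_i Y(Q_i^k)\le c\,\beta_Y\,Y(Q)L_k^{-1/q}\le c\,\beta_Y\,Y(Q)e^{-c_n\lambda_k/q}$.

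Concretely, first I would bound
\[
\frac1{Y(Q)}\int_Q|f-f_Q|^p\,w\,dx \le \frac{w(Q)}{Y(Q)}\Big(\inf_Q|f-f_Q| \text{-type term}\Big)+\sum_{k\ge 0}\frac{\lambda_{k+1}^p}{Y(Q)}\,w\Big(\{|f-f_Q|>\lambda_k\}\cap Q\Big),
\]
where the first term is controlled by the hypothesis $w(Q)\le c\,Y(Q)$ and is harmless. For each tail set, the maximal CZ cubes $\{Q_i^k\}$ at level $\lambda_k$ cover $\{|f-f_Q|>\lambda_k\}$ up to a null set, and $w(\{|f-f_Q|>\lambda_k\}\cap Q)\le\sum_i w(Q_i^k)\le\sum_i Y(Q_i^k)$ again by $w\le c\,Y$. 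Then the $\mathcal Y_q$ condition plus $L_k$-smallness from John--Nirenberg gives the exponential factor $e^{-c_n\lambda_k/q}$, and one is left to sum $\sum_k \lambda_{k+1}^p e^{-c_n\lambda_k/q}$. Choosing $\lambda_k$ linear in $k$ this is a convergent series whose value is $\lesssim (cpq)^p$ by the standard estimate $\sum_k k^p e^{-ak}\lesssim (p/a)^p\cdot a^{-1}$ for $a=c_n/q$; taking $p$-th roots produces the factor $c\,p\,q$.

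I would be careful about two bookkeeping points. First, to invoke $Y\in\mathcal Y_q$ legitimately, the family $\{Q_i^k\}$ must be pairwise disjoint subcubes of $Q$ with $\sum_i|Q_i^k|\le|Q|/L_k$; disjointness and containment are automatic from maximality of dyadic CZ cubes, and the smallness with $L_k\approx e^{c_n\lambda_k}$ is exactly what John--Nirenberg delivers once $\lambda_k$ exceeds an absolute threshold, so I would start the sum at some $k_0=k_0(n)$ and absorb the finitely many initial levels into the $w(Q)\le c\,Y(Q)$ term. Second, I must make sure the geometric/arithmetic growth of $\lambda_k$ is synchronized with the exponential gain: a purely geometric $\lambda_k=2^k\lambda_0$ also works and may be cleaner, giving $\sum_k 2^{kp}\lambda_0^p e^{-c_n 2^k\lambda_0/q}$, still summable with the right dependence after optimizing in $\lambda_0\sim q/c_n$.

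The main obstacle I anticipate is producing the sharp linear-in-$p$ and linear-in-$q$ dependence rather than something like $p^p q$ or $(pq)^p$ before the root is taken --- i.e.\ making sure the interplay between the power $\lambda_{k+1}^p$ and the exponential rate $c_n/q$ is tracked with the correct constants so that, after extracting the $p$-th root, only a single power of $p$ and a single power of $q$ survive. This is the familiar phenomenon behind exponential integrability (the $L^p$ norms of a John--Nirenberg function grow linearly in $p$), but here it interacts with the $q$ coming from the exponent in $L^{-1/q}$ in Definition~\ref{def:Y}, and keeping both linear simultaneously requires choosing the stopping levels $\lambda_k$ with the $q$-dependent spacing $\lambda_k\sim qk$ (equivalently optimizing the geometric version in $\lambda_0$). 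Everything else --- the CZ decomposition, the covering estimates, the use of $w(Q)\le c\,Y(Q)$ --- is routine.
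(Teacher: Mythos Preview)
Your approach is correct and genuinely different from both proofs the paper gives. The paper's first argument performs a \emph{single} Calder\'on--Zygmund decomposition at a level $L$ to be chosen, splits $f-f_Q=g_Q+b_Q$, and bootstraps: setting $X=\sup_Q\big(\frac{1}{Y(Q)}\int_Q|f-f_Q|^p\,w\big)^{1/p}$ one obtains $X\le 2^nL+X(\beta_Y L^{-1/q})^{1/p}$ and then solves for $X$ after choosing $L\approx \max\{\beta_Y^q,1\}$; this yields $c\,pq\max\{\beta_Y^q,1\}$ rather than linear $\beta_Y$. The paper's second argument uses sparse domination of $|f-f_Q|$ and iterates the $\mathcal Y_q$ condition along chains of nested sparse cubes to get the linear $\beta_Y$. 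Your route instead takes John--Nirenberg as a black box, feeds the resulting exponential smallness into the $\mathcal Y_q$ hypothesis level by level, and sums; after the $p$-th root this gives $c\,pq\,\beta_Y^{1/p}$, which for $\beta_Y\ge 1$ (the situation in all the paper's examples) is at least as good as the stated bound. One detail to make explicit when you write it out: the $\mathcal Y_q$ condition applies to families of \emph{cubes}, so you need the exponential bound $\sum_i|Q_i^k|\le c\,e^{-c_n\lambda_k}|Q|$ for the CZ family $\{Q_i^k\}=\{M_Q^d(|f-f_Q|)>\lambda_k\}$, not just for the level set $\{|f-f_Q|>\lambda_k\}$; this follows from John--Nirenberg in its $L^r$ form combined with the uniform $L^r$ boundedness of $M_Q^d$ for $r\ge 2$, or equivalently by running the iterated CZ that underlies the proof of JN. The paper's first proof buys self-containment (it essentially reproves JN in the process), the second buys exact linearity in $\beta_Y$, and yours buys the most transparent argument once JN is granted.
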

We will show some consequences of this result in Section \ref{Sec:Cor}.
Our next result can be seen as an update version of the work \cite{MW} which is related to the $\BMO_{w}$ classes following our notation. Being more precise we obtain quantitative versions of the results in \cite{MW}.
\begin{theorem}\label{ThmBloomBMO}
Let $b\in \BMO_{1,w}$, namely $\sup_Q\frac{1}{w(Q)}\int_Q\abs{f-f_{Q} }<\infty$. 
\begin{enumerate}
\item If $w\in A_{1}$ we have that for every $q>1$, 
\[
\left(\frac{1}{w(Q)}\int_{Q}\left|\frac{b(x)-b_{Q}}{w}\right|^{q}w(x)dx\right)^{\frac{1}{q}}\leq c_{n}\|b\|_{\BMO_{1,w}}q[w]_{A_{1}}^{\frac{1}{q'}}[w]_{A_{\infty}}^{\frac{1}{q}},
\]
and hence for any cube $Q$
\begin{equation} \label{genJN}
\norm{ \frac{ f- f_{Q} }{w} }_{ \exp L(Q,w ) } \, \leq c\, [w]_{A_1}\, \|f\|_{\BMO_{1,w}}.
\end{equation}
\item If $w\in A_{p}$ then, 
\[
\left(\frac{1}{w(Q)}\int_{Q}\left|\frac{b(x)-b_{Q}}{w}\right|^{p'}w(x)dx\right)^{\frac{1}{p'}}\leq c_{n}p'\|b\|_{\BMO_{1,w}} [w]_{A_{p}}^{\frac{1}{p}}\,[w]_{A_{\infty}}^{\frac{1}{p'}}.
\]
\end{enumerate}
\end{theorem}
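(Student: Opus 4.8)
The plan is to reduce Theorem \ref{ThmBloomBMO} to the machinery already developed for $\BMO_{v,Y}$, by choosing the right functional $Y$ in each case and then running a John--Nirenberg type argument for the space $\BMO_{1,w}$. First I would observe that the hypothesis $b\in\BMO_{1,w}$ says precisely that $\|b\|_{\BMO_{w,Y}}$ is finite when $v=1$ and $Y(Q)=w(Q)$. The quantity we want to bound, $\big(\frac{1}{w(Q)}\int_Q |(b-b_Q)/w|^q\,w\,dx\big)^{1/q}=\big(\frac{1}{w(Q)}\int_Q |b-b_Q|^q\,w^{1-q}\,dx\big)^{1/q}$, should be handled via Theorem \ref{thm:GenAssymBMO} applied to the weight $\sigma:=w^{1-q}=w^{-q/q'}$ and a suitable functional $Y$. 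The content of Theorem \ref{thm:GenAssymBMO} is exactly an $L^p(\sigma)$ estimate for $|f-f_Q|$ against $Y(Q)$, provided $\sigma(Q)\le Y(Q)$ and $Y\in\mathcal Y_{q_0}$ for some $q_0>1$; so the whole game is to produce such a $Y$ with $Y(Q)$ comparable to $w(Q)$ (up to the structural constants that become the $[w]_{A_1}$, $[w]_{A_p}$, $[w]_{A_\infty}$ factors) and to check the $\mathcal Y_{q_0}$ condition for it.

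For part (1), with $w\in A_1$, the natural candidate is $Y(Q)=w_r(Q)=|Q|\big(\avgint_Q w^r\big)^{1/r}$ with $r=r(w)=1+1/(\tau_n[w]_{A_\infty})$ the sharp reverse-H\"older exponent from \eqref{RHI}. By Remark \ref{def wr} this $Y$ lies in $\mathcal Y_{r'}$ with $\beta_Y\le 1$, and by \eqref{RHI} we have $Y(Q)\le 2\,w(Q)$. Applying Theorem \ref{thm:GenAssymBMO} with this $Y$, with exponent $p$ there replaced by our $q$, and with the weight being... here is the subtlety: Theorem \ref{thm:GenAssymBMO} gives $L^p(w\,dx)$ control, not $L^q(w^{1-q}dx)$. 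So instead I would argue more directly: write $|b-b_Q|^q w^{1-q} = \big(|b-b_Q| w^{-1}\big)^q w$, and since $w\in A_1$ gives $w^{-1}\le [w]_{A_1}\,|Q|/w(Q)$ a.e.\ on $Q$ (pointwise bound $w(x)\ge w(Q)/([w]_{A_1}|Q|)$), we get
\[
\frac{1}{w(Q)}\int_Q \Big|\frac{b-b_Q}{w}\Big|^q w\,dx
\le [w]_{A_1}^{q}\Big(\frac{|Q|}{w(Q)}\Big)^{q}\frac{1}{w(Q)}\int_Q |b-b_Q|^q w\,dx .
\]
Now $\frac{1}{w(Q)}\int_Q|b-b_Q|^q w\,dx$ is controlled by Theorem \ref{thm:GenAssymBMO} with $Y=w_r$: it is at most $\big(c\,q\,r'\,\beta_Y\|b\|_{\BMO}\big)^q \cdot \frac{Y(Q)}{w(Q)}\le \big(c\,q\,r'\|b\|_{\BMO}\big)^q\cdot 2$. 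But we only know $\|b\|_{\BMO_{1,w}}<\infty$, not $\|b\|_{\BMO}<\infty$ — so this last substitution is illegitimate as stated, and I expect the real argument replaces the role of $\|f\|_{\BMO}$ in the proof of Theorem \ref{thm:GenAssymBMO} by $\|b\|_{\BMO_{1,w}}$ together with a genuine John--Nirenberg self-improvement for $\BMO_{1,w}$, which is where the $A_\infty$ hypothesis and the exponent $r'=1+\tau_n[w]_{A_\infty}$ enter and produce the factor $[w]_{A_\infty}^{1/q}$; the $A_1$ hypothesis produces $[w]_{A_1}^{1/q'}$ through the pointwise lower bound on $w$ above. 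Assembling, one gets the claimed $q\,[w]_{A_1}^{1/q'}[w]_{A_\infty}^{1/q}$, and then specializing the family of exponents $q$ (letting $q$ range, optimizing, using $[w]_{A_\infty}\le c_n[w]_{A_1}$) and summing the resulting $L^q(w)$ bounds in a Taylor series gives the exponential integrability \eqref{genJN} with the linear factor $[w]_{A_1}$.

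For part (2), with $w\in A_p$, the same scheme is run with $q=p'$: now $w^{1-p'}=w^{-p'/p}=\sigma$ is the dual weight, which lies in $A_{p'}$ with $[\sigma]_{A_{p'}}=[w]_{A_p}^{p'/p}$, and the analogue of the pointwise bound is replaced by the reverse H\"older / $A_p$ estimate $\big(\avgint_Q \sigma\big)\big(\avgint_Q w\big)^{p'/p}\le [w]_{A_p}^{p'/p}$, i.e.\ $\sigma(Q)\le [w]_{A_p}^{p'/p}|Q|^{p'}w(Q)^{-p'/p}$. Choosing $Y(Q)=w_r(Q)$ with $r=r(\sigma)$ the sharp reverse-H\"older exponent for $\sigma$ (so that $r'\approx [\sigma]_{A_\infty}\approx[w]_{A_p}$-dependent, but ultimately the clean factor is $[w]_{A_\infty}^{1/p'}$ after using $[\sigma]_{A_\infty}\le c_n[\sigma]_{A_{p'}}$ is too lossy — one must keep the Fujii--Wilson constant of $w$ itself), one again invokes Theorem \ref{thm:GenAssymBMO} with exponent $p'$, and the combination of the $A_p$ estimate and the $A_\infty$-driven self-improvement yields $[w]_{A_p}^{1/p}[w]_{A_\infty}^{1/p'}$, with the linear-in-$p'$ loss coming from the $L^{p'}$ norm in Theorem \ref{thm:GenAssymBMO}.

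\textbf{Main obstacle.} The crux is not the weight bookkeeping but the fact that Theorem \ref{thm:GenAssymBMO} is stated for $f\in\BMO$ (classical), whereas here the hypothesis is only $b\in\BMO_{1,w}$. So the heart of the proof must be a self-improving John--Nirenberg inequality for the space $\BMO_{1,w}$ itself: starting from $\frac{1}{w(Q)}\int_Q|b-b_Q|\le \|b\|_{\BMO_{1,w}}$ one performs a Calder\'on--Zygmund decomposition of $|b-b_Q|$ at height $L\|b\|_{\BMO_{1,w}}$, the selected subcubes form an $L$-small family (Definition \ref{def:L-small}), and one iterates using that $w\in A_\infty$ controls how $w$-mass concentrates on those subcubes — this is exactly the mechanism that forces the reverse-H\"older exponent $r(w)$, hence the $[w]_{A_\infty}$ dependence, and is the analogue for $\BMO_{1,w}$ of the passage from \eqref{RHI} and classical John--Nirenberg to \eqref{BMO-BMOw}. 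Getting the dependence on the exponent ($q$, resp.\ $p'$) to be \emph{linear} rather than worse is the delicate quantitative point; it requires being careful in the iteration, as in the sharp John--Nirenberg estimates, rather than summing a crude geometric series.
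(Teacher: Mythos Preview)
Your diagnosis of the central obstacle is correct: Theorem~\ref{thm:GenAssymBMO} is stated for classical $\BMO$, and $b\in\BMO_{1,w}$ does not give that. But the paper does \emph{not} resolve this by proving a John--Nirenberg self-improvement for $\BMO_{1,w}$ via a Calder\'on--Zygmund iteration as you propose; in fact it never invokes Theorem~\ref{thm:GenAssymBMO} or the functional $w_r$ at all. The route is sparse domination plus a reduction to a known weighted sparse estimate.

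Concretely, the paper treats part~(2) first and deduces part~(1) from it via $[w]_{A_{q'}}\le[w]_{A_1}$. The key trick you are missing is to insert $w(P)$ into the sparse bound of Lemma~\ref{LemBabyLerner}:
\[
\frac{1}{|P|}\int_P|b-b_P|\;=\;\frac{w(P)}{|P|}\cdot\frac{1}{w(P)}\int_P|b-b_P|\;\le\;\langle w\rangle_P\,\|b\|_{\BMO_{1,w}},
\]
which turns the pointwise sparse domination into $|b-b_Q|\chi_Q\lesssim_n \|b\|_{\BMO_{1,w}}\sum_{P\in\mathcal S,\,P\subseteq Q}\langle w\rangle_P\,\chi_P$. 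With $\sigma=w^{1-p'}$ the target quantity is then controlled by
\[
\|b\|_{\BMO_{1,w}}\Big(\frac{1}{w(Q)}\int_Q\Big(\sum_{P\in\mathcal S,\,P\subseteq Q}\langle w\rangle_P\,\chi_P\Big)^{p'}\sigma\,dx\Big)^{1/p'},
\]
and this is precisely a localized sparse-operator bound. The paper expands the $p'$-th power as a sum over chains $P_{k+1}\subseteq\cdots\subseteq P_1\subseteq Q$ (with $k\le p'<k+1$) and cites Hyt\"onen~\cite[Lemma~5.1]{HRem}, together with a careful tracking of the constants on~\cite[p.~102]{HRem}, to extract the factor $p'[w]_{A_p}^{1/p}[w]_{A_\infty}^{1/p'}$.

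Your proposed route is not absurd---a quantitative John--Nirenberg for $\BMO_{1,w}$ under $w\in A_\infty$ is in the spirit of~\cite{MW}---but you have not carried it out, and even the preparatory bookkeeping does not line up cleanly. After the $A_1$ pointwise bound you arrive at $[w]_{A_1}^{q}(|Q|/w(Q))^{q}\frac{1}{w(Q)}\int_Q|b-b_Q|^{q}w$, and to reach the stated exponent $[w]_{A_1}^{1/q'}$ you would need the remaining factor to \emph{gain} a power $[w]_{A_1}^{-1}$, which a direct self-improvement argument will not produce. The paper's sparse-to-Hyt\"onen reduction sidesteps all of this: the $\BMO_{1,w}$ hypothesis is spent once, in the displayed identity above, and the rest is pure $A_p$/sparse machinery.
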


The remainder of the paper is organized as follows. Section \ref{Sec:Cor} is devoted to provide consequences of some of the main results and in Section \ref{sec:Proofs} we provide the proofs of the main results.

\section{Some applications and consequences of the main results}\label{Sec:Cor}

\subsection{The \texorpdfstring{$A_\infty$}{Ainfty} class}\label{sec:Ainfty}

Our first Corollary provides interesting information related to $A_\infty$ weights. In particular we will provide a new characterization of the class via Fujii-Wilson constant.

\begin{corollary} \label{AinftyBMO} Let $w$ be a weight. Then
\[
[w]_{A_{\infty}} \approx \sup_{b: \Norm{b}{\BMO}=1}\sup_{Q} \frac{1}{w(Q)}\int_Q|b(x)-b_Q| \,w(x)dx.
\]
\end{corollary}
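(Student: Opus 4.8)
The plan is to derive Corollary \ref{AinftyBMO} directly from Theorem \ref{CharactAinfty} by specializing the functional to $Y(Q) = w(Q)$. With that choice, the generalized Fujii-Wilson constant $[w]_{A_{\infty,Y}}$ becomes exactly the classical Fujii-Wilson constant $[w]_{A_\infty}$ as defined in \eqref{FW}, and the space $\BMO_{w,Y}$ becomes $\BMO_{w,w}$, so \eqref{eq:BMO-Ainfty-twoinequalituies} reads
\[
c_n\,[w]_{A_\infty} \le \sup_{b:\Norm{b}{\BMO}=1}\Norm{b}{\BMO_{w,w}} \le C_n\,[w]_{A_\infty},
\]
which is precisely the asserted equivalence once one unravels $\Norm{b}{\BMO_{w,w}} = \sup_Q \frac{1}{w(Q)}\int_Q |b - b_Q|\,w\,dx$. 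So strictly speaking this corollary is immediate.

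The only point worth spelling out is the interpretation of this equivalence as a genuine \emph{characterization} of the $A_\infty$ class. First I would note that the right-hand supremum is finite if and only if $[w]_{A_\infty}<\infty$, i.e. if and only if $I:\BMO\hookrightarrow\BMO_{w,w}$ is bounded; combined with the well-known fact that $[w]_{A_\infty}<\infty$ characterizes $w\in A_\infty$ (see \cite{DMRO-Ainfty}), this shows that the embedding $\BMO\hookrightarrow\BMO_{w,w}$ holds \emph{exactly} for $A_\infty$ weights, answering in the negative the question raised before Theorem \ref{CharactAinfty} of whether \eqref{BMO-BMOw} persists for weights outside $A_\infty$. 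The lower bound $c_n[w]_{A_\infty}\le \sup_b\Norm{b}{\BMO_{w,w}}$ is the new content relative to \cite{HP}: it says the dependence on $[w]_{A_\infty}$ in \eqref{BMO-BMOw} cannot be improved, recovering the sharpness asserted there.

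The main obstacle, such as it is, lies entirely inside the proof of Theorem \ref{CharactAinfty} (which we are entitled to assume here): in particular the lower bound, which requires exhibiting, for each $w$ and each $\varepsilon>0$, a cube $Q$ and a function $b\in\BMO$ with $\Norm{b}{\BMO}\le 1$ for which $\frac{1}{w(Q)}\int_Q|b-b_Q|w\,dx \gtrsim [w]_{A_\infty} - \varepsilon$. The natural candidate is $b = \log M(w\chi_Q)$ suitably truncated and normalized, using that $\log Mg \in \BMO$ with dimensional norm for any $g$, which converts the defining integral $\frac{1}{w(Q)}\int_Q M(w\chi_Q)$ of $[w]_{A_\infty}$ into the $\BMO_{w,w}$ oscillation; but since Theorem \ref{CharactAinfty} is already proved earlier, for the corollary nothing further is needed beyond the substitution $Y=w(\cdot)$ and the remark on characterization above.
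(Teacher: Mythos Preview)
Your proof is correct and matches the paper's approach exactly: the corollary is obtained by specializing Theorem~\ref{CharactAinfty} with $Y(Q)=w(Q)$, so that $[v]_{A_{\infty,Y}}=[w]_{A_\infty}$ and $\BMO_{v,Y}=\BMO_{w,w}$. Your side remark on the test function for the lower bound in Theorem~\ref{CharactAinfty} is essentially right in spirit, though the paper actually uses $b=\log\big(M(v\chi_Q)/v_Q\big)^{1/2}$ (the square root ensuring an $A_1$ weight with dimensional constant); in any case, as you note, this is irrelevant to the corollary itself.
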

Our second corollary allows us to reprove known John-Nirenberg type estimates.
\begin{corollary}
Let $w \in A_{\infty}$ and $f\in \BMO$. There exists a dimensional constant $c_n$ independent of $f$ and $w$ such that  for each cube $Q$, 
\begin{equation}\label{1part}
\left(\frac{1}{ w(Q)  } \int_{ Q }   |f -f_{Q}|^p     \,wdx\right)^{1/p}\, \leq c_n\, p\,[w]_{A_{\infty}} \|f\|_{\BMO}
\end{equation}
and hence 
\begin{equation}\label{2part}
\norm{f -f_{Q}}_{\exp L(Q, \frac{wdx}{w(Q)} )} \leq C_n\, [w]_{A_{\infty}} \|f\|_{\BMO}.
\end{equation}
\end{corollary}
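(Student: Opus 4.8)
The plan is to deduce this corollary directly from Theorem~\ref{thm:GenAssymBMO} by checking that the functional $Y(Q):=w(Q)$ satisfies the hypotheses of that theorem. Two things must be verified: first, the trivial domination $w(Q)\le Y(Q)$ holds with constant $1$ since in fact $w(Q)=Y(Q)$; second, and this is the real point, that $Y(Q)=w(Q)$ belongs to the class $\mathcal{Y}_q$ for a suitable $q>1$ with a controlled constant $\beta_Y$. This is exactly where the hypothesis $w\in A_\infty$ enters, and I expect it to be the main obstacle.

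To establish $w\in\mathcal{Y}_q$, fix a cube $Q$ and a family $\Lambda=\{P_i\}$ of pairwise disjoint subcubes of $Q$ with $\sum_i|P_i|\le |Q|/L$. We need $\sum_i w(P_i)\le \beta_Y\, w(Q)\,L^{-1/q}$. The natural tool is the optimal reverse Hölder inequality \eqref{RHI}: with $r=r(w)=1+\tfrac1{\tau_n[w]_{A_\infty}}$ we have $\big(\avgint_Q w^{r}\big)^{1/r}\le 2\avgint_Q w$. Applying Hölder's inequality on each $P_i$ and then \eqref{RHI} on the big cube $Q$,
\begin{align*}
\sum_i w(P_i)
&= \sum_i |P_i|\,\avgint_{P_i} w
\le \sum_i |P_i|^{1/r'}\Big(\int_{P_i} w^{r}\Big)^{1/r}\\
&\le \Big(\sum_i |P_i|\Big)^{1/r'}\Big(\sum_i \int_{P_i} w^{r}\Big)^{1/r}
\le \Big(\frac{|Q|}{L}\Big)^{1/r'}\Big(\int_{Q} w^{r}\Big)^{1/r}\\
&= L^{-1/r'}\,|Q|^{1/r'}\Big(\int_Q w^{r}\Big)^{1/r}
\le 2\,L^{-1/r'}\,w(Q),
\end{align*}
where the middle inequality is Hölder for sums with exponents $r'$ and $r$ applied to the disjoint pieces. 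Thus $w\in\mathcal{Y}_q$ with $q=r'$ and $\beta_Y\le 2$. Note $q'=r=1+\tfrac1{\tau_n[w]_{A_\infty}}$, so $q=r'\approx \tau_n[w]_{A_\infty}\approx 2^n[w]_{A_\infty}$.

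Now Theorem~\ref{thm:GenAssymBMO} applies and gives, for every cube $Q$,
\[
\Big(\frac{1}{w(Q)}\int_Q |f-f_Q|^p\, w\,dx\Big)^{1/p}\le c\,p\,q\,\beta_Y\,\|f\|_{\BMO}\le c\,p\cdot(2^n[w]_{A_\infty})\cdot 2\cdot\|f\|_{\BMO},
\]
which is precisely \eqref{1part} with a dimensional constant $c_n$ absorbing the factor $2^{n+1}$. For the exponential estimate \eqref{2part}, I would pass from the $L^p(w\,dx/w(Q))$ bounds that are linear in $p$ to an $\exp L$ bound in the standard way: the inequality $\|g\|_{L^p(\mu)}\le C p$ for all $p\ge 1$ (with $\mu$ the probability measure $w\,dx/w(Q)$ on $Q$) implies $\|g\|_{\exp L(\mu)}\le C' C$ by expanding $\exp(|g|/\lambda)$ in its Taylor series, bounding each moment, and choosing $\lambda\sim C$ so the resulting series converges; here $g=f-f_Q$ and $C=c_n[w]_{A_\infty}\|f\|_{\BMO}$. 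This yields \eqref{2part} with constant $C_n[w]_{A_\infty}\|f\|_{\BMO}$ and completes the proof.
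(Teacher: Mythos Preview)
Your proof is correct and uses the same two ingredients as the paper---Theorem~\ref{thm:GenAssymBMO} together with the sharp reverse H\"older inequality~\eqref{RHI}---but packages them differently. The paper applies Theorem~\ref{thm:GenAssymBMO} with the auxiliary functional $Y(Q)=w_r(Q)$ of Remark~\ref{def wr}, for which H\"older's inequality alone gives $w_r\in\mathcal{Y}_{r'}$ with $\beta_Y\le 1$, and only afterwards invokes~\eqref{RHI} to replace $w_r(Q)$ by $2w(Q)$. You instead use~\eqref{RHI} upstream to verify directly that $Y(Q)=w(Q)$ lies in $\mathcal{Y}_{r'}$ with $\beta_Y\le 2$, and then apply the theorem. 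The computations are the same rearranged: your chain $\sum_i w(P_i)\le L^{-1/r'}w_r(Q)\le 2L^{-1/r'}w(Q)$ is exactly the paper's two steps composed.

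One point worth flagging: the paper remarks at the end of Section~\ref{sec:Ainfty} that taking $Y(Q)=w(Q)$ ``cannot be so precise'' and would lead to exponential growth $e^{c[w]_{A_\infty}}$. This refers to the \emph{smallness-based} proof of Theorem~\ref{thm:GenAssymBMO} (inequality~\eqref{eq:P(p,p)-CZ}), whose constant is $\max\{1,\beta_Y^{q}\}$ rather than $\beta_Y$; with your $\beta_Y=2$ and $q\approx[w]_{A_\infty}$ that variant would indeed blow up. Since you invoke Theorem~\ref{thm:GenAssymBMO} as stated (with the linear-in-$\beta_Y$ constant coming from the sparse proof), your argument is unaffected; but this is why the paper routes through $w_r$, where $\beta_Y\le 1$ and both proofs of the theorem yield the same sharp bound.
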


For the proof  of the latter we consider the functional $Y(Q)=w_r(Q)=\left( \frac{1}{|Q|}\int_{Q }w^{r}dx\right)^{1/r} \,|Q|$\,  which satisfies Definition \ref{def:Y} 
with  constants $\beta_{\strt{1.7ex}Y}\leq 1$ and exponent $q=r'$ by Remark \ref{def wr}.  Hence by Theorem \ref{thm:GenAssymBMO}
\begin{equation*}
\left(\frac{1}{ w_r(Q)  } \int_{ Q }   |f -f_{Q}|^p     \,wdx\right)^{1/p}\, \leq c_n\, p\,r'\,  \|f\|_{\BMO}
\end{equation*}
Finally, if  $w \in A_{\infty}$, the reverse H\"older inequality \eqref{RHI} with $r=r(w)$  so that  $r' \approx [w]_{A_{\infty}}$, yields \eqref{1part}.  
Inequality \eqref{2part}  will be obtained from the following well known measure theory argument. Consider a probability space $(X,\mu)$ and a function $g$  such that for some $p_0\geq 1$, $c>0$, and $\alpha>0$ we have that 
$$
\norm{g}_{L^p(X,\mu)} \leq c\,p^{\alpha} \qquad p\geq p_0.
$$
Then for a universal multiple of $c$,  
$$
\norm{g}_{\exp L^{\frac{1}{\alpha}} (X,\mu)} \leq c.\,
$$
We conclude the section by mentioning that above argument cannot be so precise if we use the more natural functional $Y(Q)=w(Q)$ instead of the functional $Y(Q)=w_r(Q)$. Indeed, if we check the details we would get an exponential growth $e^{c[w]_{A_{\infty}}}$  both in \eqref{1part} and in \eqref{2part}  instead of linear $c[w]_{A_{\infty}}$.

\subsection{The weak \texorpdfstring{$A_\infty$}{Ainfty} class}\label{sec:WeakAinfty}

Besides the example of functional displayed in \eqref{wr} another case of interest is the functional defined by the \emph{weak} condition:
\[Y(Q)=w(2Q).\]

This is related to the condition introduced by 
E. Sawyer in \cite{Sawyer-1981} by defining the  ``weak'' $A_{\infty}$ class as those weights satisfying the estimate
$$
w(E)\leq c\,\left(\frac{|E|}{|Q|}\right)^{\delta} w(2Q).
$$
This class of weights is very interesting since appears in many contexts  like the theory of quasiregular mappings or regularity fot solutions of elliptic PDE's (see, for example, \cite{BojIwa}).

In \cite{AHT}, the weak $A_\infty$ class was characterized by means of a suitable Fujii-Wilson type $A_\infty$ constant, namely
\begin{equation*}
  [w]^{weak}_{A_\infty}:=\sup_Q\frac{1}{w(2Q)}\int_Q M(w\chi_{\strt{1.7ex}Q} ).
\end{equation*}
Notice that the constant $2$ in the average could be replaced by any parameter $\sigma>1$ as shown in \cite{AHT}. It is also shown  there that the following reverse Holder's inequality holds
\begin{equation} \label{SharpWeakRHI}
  \Big(\avgint_Q w^{r(w)}\Big)^{\frac{1}{r(w)} }\leq 2\avgint_{2Q} w.
\end{equation}
with $r(w):=1+\frac{1}{ \tau_n \,[w]^{weak}_{A_{\infty}}}$ where $\tau_n$  is a dimensional constant that we may take to be $\tau_n \approx 2^{n}$.

Relying upon Theorems \ref{CharactAinfty} and \ref{thm:GenAssymBMO} and arguing as in the preceding section we obtain the following corollaries
\begin{corollary} Let $w$ be weight. Then
\begin{equation*}
[w]^{weak}_{A_\infty} \approx \sup_{f: \Norm{f}{\BMO}=1}\sup_{Q} \frac{1}{w(2Q)}\int_Q|f(x)-f_Q| \,wdx.
\end{equation*}
\end{corollary}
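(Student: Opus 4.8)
The plan is to derive this corollary from the two main theorems applied to the functional $Y(Q)=w(2Q)$, exactly in the parallel manner to Corollary \ref{AinftyBMO} for the ordinary $A_\infty$ class. First I would verify that $Y(Q)=w(2Q)$ is an admissible choice for Theorem \ref{CharactAinfty}: this is immediate, since that theorem assumes \emph{nothing} on the functional $Y$ nor on the weight. Plugging $Y(Q)=w(2Q)$ into \eqref{eq:twoweight-Ainfty} gives exactly $[v]_{A_{\infty,Y}}=\sup_Q \frac{1}{w(2Q)}\int_Q M(v\chi_Q)=[w]^{weak}_{A_\infty}$, and plugging the same functional into the right-hand side of \eqref{eq:BMO-Ainfty-twoinequalituies} gives $\sup_{f:\|f\|_{\BMO}=1}\sup_Q \frac{1}{w(2Q)}\int_Q|f-f_Q|\,w\,dx$. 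Thus the corollary is literally the statement of Theorem \ref{CharactAinfty} for this particular $Y$, with the comparability constants being the dimensional $c_n,C_n$ from that theorem.

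The only genuine point to check is that $[w]^{weak}_{A_\infty}$ as just recovered really does coincide with the quantity defined in \cite{AHT}, i.e. that the class ``weak $A_\infty$'' of \cite{Sawyer-1981} is the one captured; but this is precisely the characterization recalled in the excerpt just before the corollary, so no new work is required. Hence the proof is a one-line specialization: apply Theorem \ref{CharactAinfty} with $Y(Q)=w(2Q)$ and read off the definitions.

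If one instead wanted the sharper, John--Nirenberg flavored companion estimate (the analogue of \eqref{1part}--\eqref{2part} in the weak setting), the route would be to run Theorem \ref{thm:GenAssymBMO} with $Y(Q)=w_r(2Q)=|2Q|\big(\frac{1}{|2Q|}\int_{2Q}w^r\big)^{1/r}$: one checks that this $Y$ lies in $\mathcal{Y}_{r'}$ with $\beta_Y$ controlled by a dimensional constant (the dilation by $2$ only costs a factor $2^{n/r'}\le 2^n$ in \eqref{DefY(Q)}, since a family $\Lambda\in S_Q(L)$ of subcubes of $Q$ dilates to subcubes of $2Q$ whose doubles still tile a fixed-overlap cover of $2Q$), and that $w(Q)\le w(2Q)\le w_r(2Q)$. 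Theorem \ref{thm:GenAssymBMO} then yields $\big(\frac{1}{w_r(2Q)}\int_Q|f-f_Q|^p w\big)^{1/p}\le c_n\,p\,r'\,\|f\|_{\BMO}$, and finally the weak reverse H\"older inequality \eqref{SharpWeakRHI} with $r=r(w)$, $r'\approx[w]^{weak}_{A_\infty}$, converts $w_r(2Q)$ into a dimensional multiple of $w(2Q)$, giving the weak analogue of \eqref{1part} and, via the same $\exp L^{1/\alpha}$ measure-theory argument, of \eqref{2part}.

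The main (and really only) obstacle is the bookkeeping in the previous paragraph: making sure the dilation $Q\mapsto 2Q$ interacts correctly with $L$-smallness and with the reverse H\"older inequality \eqref{SharpWeakRHI}, so that the passage from $w_r(2Q)$ back to $w(2Q)$ costs only dimensional constants. For the corollary \emph{as stated}, however, there is no obstacle at all: it is an immediate instance of Theorem \ref{CharactAinfty}.
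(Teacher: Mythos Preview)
Your proposal is correct and follows exactly the paper's approach: the corollary is obtained as an immediate specialization of Theorem \ref{CharactAinfty} with $v=w$ and $Y(Q)=w(2Q)$, which the paper indicates by saying ``Relying upon Theorems \ref{CharactAinfty} and \ref{thm:GenAssymBMO} and arguing as in the preceding section.'' Your additional sketch for the companion $L^p$/$\exp L$ corollary via $Y(Q)=w_r(2Q)$ and the weak reverse H\"older inequality \eqref{SharpWeakRHI} is also in line with the paper's intended argument, though that belongs to the next corollary rather than the one under review.
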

\begin{corollary}
Let $w \in A^{weak}_{\infty}$, and $f\in \BMO$. There exists a dimensional constant $c_n$ independent of $f$ and $w$ such that  for each cube $Q$, 
\begin{equation*}
\left(\frac{1}{ w(2Q)  } \int_{ Q }   |f -f_{Q}|^p     \,wdx\right)^{1/p}\, \leq c_n\, p[w]^{weak}_{A_{\infty}} \|f\|_{\BMO}
\end{equation*}
and hence 
\begin{equation*}
\norm{f -f_{Q}}_{\exp L(Q, \frac{wdx}{w(2Q)} )} \leq C_n\, [w]^{weak}_{A_{\infty}} \|f\|_{\BMO}.
\end{equation*}
\end{corollary}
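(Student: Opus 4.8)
The statement to prove is exactly the weak $A_\infty$ analogue of the reverse-Hölder-based corollary proved in Section \ref{sec:Ainfty}, so the plan is to mimic that argument with the functional $Y(Q)=w(2Q)$ replaced into the role played before by $w_r(Q)$. The main tool is Theorem \ref{thm:GenAssymBMO}, which requires two inputs: that $Y\in\mathcal Y_q$ for some $q>1$, and that $w(Q)\le Y(Q)$ for every cube $Q$. The second condition is immediate here since $Q\subset 2Q$ gives $w(Q)\le w(2Q)=Y(Q)$. The first condition is the one that needs genuine work, and I will comment on it below. Granting it, Theorem \ref{thm:GenAssymBMO} yields, for $f\in\BMO$ and every cube $Q$,
$$
\left(\frac{1}{w(2Q)}\int_Q|f-f_Q|^p\,w\,dx\right)^{1/p}\le c\,p\,q\,\beta_{\strt{1.7ex}Y}\|f\|_{\BMO}.
$$

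The next step is to bring in the weak-$A_\infty$ hypothesis through the sharp weak reverse Hölder inequality \eqref{SharpWeakRHI}. This is where the exponent is tuned: we are free to choose any $q>1$ in the application of Theorem \ref{thm:GenAssymBMO}, and to obtain linear dependence on $[w]^{weak}_{A_\infty}$ we want the bound $w_r(Q)\lesssim w(2Q)$ with $r=r(w)=1+\frac{1}{\tau_n[w]^{weak}_{A_\infty}}$, i.e. $r'\approx[w]^{weak}_{A_\infty}$; indeed \eqref{SharpWeakRHI} says precisely $\big(\avgint_Q w^{r(w)}\big)^{1/r(w)}\le 2\avgint_{2Q}w$, which rewritten with the $Y$-notation is $w_{r(w)}(Q)\le 2\,w(2Q)$. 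Thus after applying Theorem \ref{thm:GenAssymBMO} with the functional $w_{r(w)}$ (for which $\beta_Y\le1$ and $q=r'$ by Remark \ref{def wr}) and then using \eqref{SharpWeakRHI} to replace $w_{r(w)}(Q)$ by $w(2Q)$ in the denominator, we arrive at
$$
\left(\frac{1}{w(2Q)}\int_Q|f-f_Q|^p\,w\,dx\right)^{1/p}\le c_n\,p\,[w]^{weak}_{A_\infty}\,\|f\|_{\BMO},
$$
which is the first displayed inequality of the corollary. Note this route sidesteps the verification of $w(2Q)\in\mathcal Y_q$ entirely: one works with $Y=w_{r(w)}$ throughout and only invokes $w(2Q)$ at the very end via \eqref{SharpWeakRHI}, exactly as in the $A_\infty$ case of Section \ref{sec:Ainfty}.

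Finally, the exponential-integrability statement follows from the first inequality by the same abstract measure-theoretic principle quoted after inequality \eqref{2part}: on the probability space $(Q,\frac{w\,dx}{w(2Q)})$ — which is a probability space precisely because $w(Q)\le w(2Q)$, so the total mass is $\le 1$, and one can normalize — the function $g=f-f_Q$ satisfies $\|g\|_{L^p}\le c\,p\,[w]^{weak}_{A_\infty}\|f\|_{\BMO}$ for all $p\ge1$ with $\alpha=1$, hence $\|g\|_{\exp L}\le C_n[w]^{weak}_{A_\infty}\|f\|_{\BMO}$.

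**The main obstacle.** The only delicate point is the choice between two strategies for invoking Theorem \ref{thm:GenAssymBMO}. If one insisted on using $Y(Q)=w(2Q)$ directly, one would have to prove $w(2Q)\in\mathcal Y_q$, i.e. that for any $L$-small family $\{P_i\}\subset Q$ one has $\sum_i w(2P_i)\le c\,w(2Q)L^{-1/q}$ — and this is genuinely false in general (the dilated cubes $2P_i$ overlap heavily and can cover far more than $2Q$, so no such bound can hold for an arbitrary weight without a doubling-type assumption). So the honest route is the one described above: apply Theorem \ref{thm:GenAssymBMO} with the harmless functional $w_{r(w)}$ and only at the end pass to $w(2Q)$ using the weak reverse Hölder inequality \eqref{SharpWeakRHI}, whose whole purpose is to perform exactly this comparison with linear constant. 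Once that structural choice is made, every remaining step is a routine transcription of Section \ref{sec:Ainfty}.
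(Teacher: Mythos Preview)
Your proposal is correct and follows essentially the same approach as the paper: apply Theorem \ref{thm:GenAssymBMO} with the functional $Y=w_{r(w)}$ (for which $\beta_Y\le 1$ and $q=r(w)'\approx[w]^{weak}_{A_\infty}$), then use the weak reverse H\"older inequality \eqref{SharpWeakRHI} to replace $w_{r(w)}(Q)$ by $w(2Q)$, and finally invoke the standard $L^p\Rightarrow\exp L$ argument. Your additional discussion of why the direct choice $Y(Q)=w(2Q)$ is unworkable (failure of the smallness condition due to overlap of the dilated cubes) is a helpful clarification that the paper leaves implicit.
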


\subsection{The \texorpdfstring{$C_p$}{Cp} class}\label{sec:Cp}

The $C_p$ class is a class of weights containing the weak $A_\infty$ class considered above and hence larger than the $A_\infty$ class.  It is a very interesting class of weights which is related intimately to the theory of singular integrals although very recently in \cite{ABES} appeared some application to PDE.  
Indeed,  it is a well known fact that $w\in A_\infty$ is a sufficient condition for the so called Coifman-Fefferman estimate, namely 
\begin{equation}\label{CF}
\int_{\RN} T^*f(x)^pw(x)dx\leq c_{n,p,T,w}\int_{\RN} Mf(x)^pw(x)dx\qquad 0<p<\infty
\end{equation}
where $T^*$ is  the maximal singular integral operator associated  to the Calder\'on-Zygmund  singular integral operator $T$.  

Muckenhoupt \cite{MCF} proved that $w\in A_\infty$ is not necessary for \eqref{CF} to hold. He showed in the case of the Hilbert transform that for $1<p<\infty$, the correct  necessary condition is that $w\in C_p$, namely, that for every cube $Q$ and every measurable subset of $E\subset Q$, 
\begin{equation}\label{Cp}
w(E)\leq c\left(\frac{|E|}{|Q|}\right)^\delta \int_{\RN}M(\chi_{\strt{1.7ex}Q} )^pw
\end{equation}
Later on Sawyer \cite{SCF} showed that the $C_p$ condition is ``almost'' sufficient in the sense  that  $w\in C_{p+\varepsilon}$ implies \eqref{CF} for $1<p<\infty$. A natural counterpart for the classical Fefferman-Stein estimate relating $M$ and $M^\#$,  the classical sharp maximal function, was provided by Yabuta \cite{YCF} and slightly improved by Lerner in \cite{LCF}. Recently, in \cite{CLPRCF} Sawyer's result was extended to the full range $0<p<\infty$ imposing as a sufficient condition that $w\in C_{\max\{1,p\}+\varepsilon}$ and including estimates for other operators as well. Similar estimates were settled too for the weak norm relying upon sparse domination.  At this point,  note that whether $C_p$ is sufficient for \eqref{CF} to hold remains an open question.

A fact that makes   the $C_p$ classes interesting is that weights in those classes are allowed to have ``holes'', namely to be zero in sets of no null measure, in some reasonable sense. 
Examples of those kind of weights can be found in \cite{MCF,BCF}.

Very recently Canto \cite{CCF} settled a suitable quantitative reverse H\"older inequality for $C_p$ classes in terms of the following Fujii-Wilson type constant
\[ [w]_{\strt{1.7ex} C_p}:=\sup_Q\frac{1}{\int_{\RN}M(\chi_{\strt{1.7ex}Q} )^pw}\int_Q M(w\chi_{\strt{1.7ex}Q} ).\]
Relying upon that reverse H\"older inequality it was also established in \cite{CCF} that if $w\in C_q$ for $1<p<q<\infty$ then
\[\|T^*f\|_{\strt{1.7ex}L^p(w)}\lesssim[w]_{\strt{1.7ex} C_q}\log\left(e+[w]_{\strt{1.7ex} C_q}\right)\|Mf\|_{\strt{1.7ex}L^p(w)}\]

Our next Corollary, which is again a direct consequence of Theorem \ref{CharactAinfty}, provides a new characterization of  the $C_p$ class.

\begin{corollary} Let $w$ a weight and $p>1$. We have that 
\begin{equation*}
[w]_{\strt{1.7ex}C_p} \approx \sup_{f: \Norm{f}{\BMO}=1}\sup_{Q} \frac{1}{\int_{\RN}M(\chi_{\strt{1.7ex}Q} )^pw}\int_Q|f(x)-f_Q| \,wdx
\end{equation*}
\end{corollary}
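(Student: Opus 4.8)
The plan is to deduce the Corollary as an immediate instance of Theorem \ref{CharactAinfty}, applied with the weight $v:=w$ and the functional
\begin{equation*}
Y(Q):=\int_{\RN}M(\chi_{\strt{1.7ex}Q})^{p}w .
\end{equation*}
First I would check that this $Y$ is an admissible functional $\mathcal{Q}\to(0,\infty)$. Positivity is immediate: $M(\chi_{Q})(x)>0$ for every $x\in\RN$, and in fact $M(\chi_{Q})\ge\chi_{Q}$ already gives $Y(Q)\ge w(Q)$. Finiteness is exactly where the hypothesis $p>1$ is used: for $x$ far from $Q$ one has $M(\chi_{Q})(x)\approx(\ell(Q)/|x-x_{Q}|)^{n}$, so the integrand decays like $|x|^{-np}$ at infinity, which is integrable precisely when $p>1$; hence $Y(Q)<\infty$ for every cube under mild decay assumptions on $w$. (If $Y(Q)=+\infty$ for some $Q$, that cube contributes $0$ to every supremum appearing below and may simply be discarded, so there is no loss in assuming $Y$ finite.)

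With these choices, unwinding the definitions gives
\begin{equation*}
[w]_{A_{\infty,Y}}=\sup_{Q}\frac{1}{\int_{\RN}M(\chi_{\strt{1.7ex}Q})^{p}w}\int_{Q}M(w\chi_{\strt{1.7ex}Q})=[w]_{\strt{1.7ex}C_{p}},
\end{equation*}
while
\begin{equation*}
\|b\|_{\BMO_{w,Y}}=\sup_{Q}\frac{1}{\int_{\RN}M(\chi_{\strt{1.7ex}Q})^{p}w}\int_{Q}|b-b_{Q}|\,w .
\end{equation*}
Substituting both identities into the two-sided estimate \eqref{eq:BMO-Ainfty-twoinequalituies} of Theorem \ref{CharactAinfty} yields
\begin{equation*}
c_{n}\,[w]_{\strt{1.7ex}C_{p}}\le\sup_{f:\,\Norm{f}{\BMO}=1}\sup_{Q}\frac{1}{\int_{\RN}M(\chi_{\strt{1.7ex}Q})^{p}w}\int_{Q}|f-f_{Q}|\,w\le C_{n}\,[w]_{\strt{1.7ex}C_{p}},
\end{equation*}
which is precisely the asserted equivalence.

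There is essentially no obstacle here: all the analytic content — the John--Nirenberg inequality together with the construction of the extremal $\BMO$ function realizing the lower bound — is already packaged inside Theorem \ref{CharactAinfty}, which crucially assumes nothing on the functional $Y$ or on the weight. The only point requiring a line of verification is the bookkeeping above, i.e.\ that $Y(Q)=\int_{\RN}M(\chi_{Q})^{p}w$ defines a genuine positive functional. The one mildly delicate aspect is that $C_{p}$ weights are allowed to have ``holes'', but this does not affect positivity of $Y$ since $M(\chi_{Q})$ is strictly positive everywhere; and, as noted, possible infinite values of $Y$ are harmless for the suprema involved.
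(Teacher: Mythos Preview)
Your proof is correct and follows exactly the paper's approach: the Corollary is obtained as a direct instance of Theorem \ref{CharactAinfty} with $v=w$ and $Y(Q)=\int_{\RN}M(\chi_{Q})^{p}w$, and you have in fact been more careful than the paper in verifying that this $Y$ is an admissible $(0,\infty)$-valued functional.
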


\section{Proofs of the main results}\label{sec:Proofs}
\subsection{Proof of Theorem \ref{CharactAinfty} } \label{sec:CharactAinfty}
We recall that a family of cubes $\mathcal{S}$ is $\eta$-sparse if for every $Q\in\mathcal{S}$ there exists a measurable subset $E_Q\subset Q$ such that 
\begin{enumerate}
\item $\eta |Q|\leq |E_Q|$.
\item The sets $E_Q$ are pairwise disjoint. 
\end{enumerate}

We will start by showing the second inequality from \eqref{eq:BMO-Ainfty-twoinequalituies}. We will rely upon a simplified version of  \cite[Lemma 5.1]{LORRAdv} (see also \cite{HComm2}).

\begin{lemma}\label{LemBabyLerner} Let $Q$ a cube. There exists a sparse family $\mathcal{S}\subset \mathcal{D}(Q)$, where $\mathcal{D}(Q)$ stands for the dyadic grid relative to $Q$, such that 
\[
|b(x)-b_{Q}|\chi_{\strt{1.7ex}Q} (x) \leq c_n\sum_{P\in\mathcal{S}} \frac{1}{|P|}\int_P  |b(y)-b_{P}|\,dy \chi_{P}(x).
\]
\end{lemma}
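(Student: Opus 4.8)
The plan is to construct $\mathcal{S}$ by an iterated Calder\'on--Zygmund stopping time adapted to $b$, which is the self-contained version of Lerner's local oscillation decomposition used in \cite{LORRAdv}; one could alternatively invoke Lerner's median oscillation formula directly and estimate each oscillation term by $\frac{1}{|P|}\int_P|b-b_P|$, but the stopping-time argument is just as short. Fix $Q$ and abbreviate $a_P:=\frac{1}{|P|}\int_P|b-b_P|$ for a cube $P$. Build $\mathcal{S}$ recursively: put $Q\in\mathcal{S}$; given $P\in\mathcal{S}$, let its \emph{children} be the maximal proper subcubes $P'\in\mathcal{D}(P)$ with $\frac{1}{|P'|}\int_{P'}|b-b_P|>2^{n+1}a_P$, add them to $\mathcal{S}$, and iterate.

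First I would record the two routine consequences of the stopping rule. By maximality the dyadic parent of a child $P'$ of $P$ still has $|b-b_P|$-average at most $2^{n+1}a_P$, so $\frac{1}{|P'|}\int_{P'}|b-b_P|\le 2^{2n+1}a_P$ and in particular $|b_{P'}-b_P|\le 2^{2n+1}a_P$. Summing the defining inequality over the pairwise disjoint children of $P$ gives $\sum_{P'}|P'|\le 2^{-(n+1)}|P|\le\frac12|P|$, so $E_P:=P\setminus\bigcup_{P'}P'$ has $|E_P|\ge\frac12|P|$, and the sets $E_P$, $P\in\mathcal{S}$, are pairwise disjoint; thus $\mathcal{S}$ is $\tfrac12$-sparse. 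Also, for $x\in E_P$ every dyadic cube of $\mathcal{D}(P)$ through $x$ has $|b-b_P|$-average at most $2^{n+1}a_P$, so the dyadic Lebesgue differentiation theorem yields $|b(x)-b_P|\le 2^{n+1}a_P$ for a.e.\ $x\in E_P$.

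To finish, observe that the total measure of the cubes of generation $k$ is at most $2^{-k}|Q|$, hence the set of points belonging to infinitely many cubes of $\mathcal{S}$ is null. So for a.e.\ $x\in Q$ the cubes of $\mathcal{S}$ containing $x$ form a finite chain $Q=P_0\supsetneq P_1\supsetneq\dots\supsetneq P_m$ in which each $P_{i+1}$ is a child of $P_i$ and $x\in E_{P_m}$; telescoping and using the two bounds above,
\[
|b(x)-b_Q|\le|b(x)-b_{P_m}|+\sum_{i=0}^{m-1}|b_{P_{i+1}}-b_{P_i}|\le 2^{2n+1}\sum_{i=0}^{m}a_{P_i}\le 2^{2n+1}\sum_{P\in\mathcal{S}}\frac{1}{|P|}\int_P|b-b_P|\,\chi_P(x),
\]
which is the assertion with $c_n=2^{2n+1}$. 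The only points requiring care are the a.e.\ bound on $E_P$ and the nullity of the ``infinite chain'' set, but both are standard applications of Lebesgue differentiation together with the geometric decay of the generations; I do not expect any genuine obstacle here.
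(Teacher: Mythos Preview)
The paper does not give its own proof of this lemma; it is quoted as a simplified version of \cite[Lemma 5.1]{LORRAdv} (see also \cite{HComm2}) and used as a black box. Your iterated Calder\'on--Zygmund stopping-time argument is correct and is precisely the standard self-contained proof underlying that reference, so there is nothing to compare.
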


Armed with that lemma we can argue as follows:
\begin{eqnarray*}
\frac{1}{Y(Q)}\int_{Q}|b-b_{Q}|v  & \le &  c_n \frac{1}{Y(Q)}\int_Q \sum_P\left (\frac{1}{|P|}\int_P|b(y)-b_P|dy\right ) \chi_{\strt{1.7ex}P} (x) v(x) dx\\
&\le & c_n \frac{1}{Y(Q)}\sum_P\left (\frac{1}{|P|}\int_P|b(y)-b_P|dy\right ) v(P)
\end{eqnarray*}

Now, since $b\in BMO$, we have that
\begin{eqnarray*}
\frac{1}{Y(Q)}\int_{Q}|b-b_{Q}|v  & \le &  C_n \frac{1}{Y(Q)} \sum_P\left (\frac{1}{|P|}\int_P|b(y)-b_P|dy\right )v(P)\\
 & \le & C_n\|b\|_{\BMO}\frac{1}{Y(Q)}\sum_{P}v(P)\\
& \lesssim &\|b\|_{\BMO}\frac{1}{Y(Q)}\sum_{P\in\mathcal{S}}\frac{v(P)}{|P|}|E_{P}|\\
&\leq&\|b\|_{\BMO}\frac{1}{Y(Q)}\sum_{P\in\mathcal{S}}\int_{E_{P}}M(v\chi_{\strt{1.7ex}Q})\\
& \leq & \|b\|_{\BMO}\frac{1}{Y(Q)}\int_{Q}M(v\chi_{\strt{1.7ex}Q})\leq\|b\|_{\BMO}[v]_{A_{\infty,Y}}.
\end{eqnarray*}
This proves that
\[ \|b\|_{\BMO_{v,Y}}\lesssim \|b\|_{\BMO}\,[v]_{A_{\infty,Y}} \]

For the second part of the proof, let us denote
\[X:= \sup_{\|b\|_{\BMO}=1}\|b\|_{\BMO_{v,Y}}.\]
Then it suffices to show that $[v]_{\strt{1.7ex}A_{\infty,Y}}\lesssim_n X$ and we shall
assume that $X<\infty$ since otherwise there's nothing to prove. We note as well that from the definition of $X$ it follows that 
\begin{equation}\label{eq:hyp}
\|b\|_{\strt{1.7ex}\BMO_{v,Y}}\leq X\,\|b\|_{\strt{1.7ex}\BMO}.
\end{equation}

Taking this into account, we claim first that for every cube $Q$
\begin{equation}
v(Q)\leq4  X\,Y(Q).\label{eq:uv}
\end{equation}
We may assume first that $v(Q)>0$, otherwise is trivial.  Now, for a cube $Q$ we let $\tilde{Q}\subset Q$ be another cube
such that $\frac{1}{2}|Q|=|\tilde{Q}|$. Then $(\chi_{\strt{2ex}\tilde{Q}})_{\strt{1.7ex}Q}=\frac{1}{2}$
and we have that 
\[
\begin{split}\frac{v(Q)}{Y(Q)} & =\frac{2}{Y(Q)}\int_{Q}|\chi_{\tilde{Q}}-(\chi_{\tilde{Q}})_{Q}|v\leq 2\|\chi_{\tilde{Q}}\|_{\BMO_{v,Y}}\\
 & \leq 2X\|\chi_{\tilde{Q}}\|_{\BMO}\leq 4X,
\end{split}
\]
from which the claim follows.

We now observe that to prove the theorem it suffices to show that
there exists some finite constant $\alpha_{n}\geq1$ such that for every
cube $Q$, the inequality

\begin{equation}
\frac{1}{|Q|}\int_{Q}\log^{+}\left(\frac{v(x)}{\alpha_{n}^{2}v_{Q}}\right)v(x)dx\ \lesssim_n X\,\frac{Y(Q)}{|Q|}\label{eq:Target1}
\end{equation}
holds. Indeed, provided (\ref{eq:Target1}) holds, and taking into
account (\ref{eq:uv}),
\[
\begin{split}\frac{1}{|Q|}\int_{Q}M(\chi_{Q}v)dx & \simeq\frac{1}{|Q|}\int_{Q}\left(1+\log^{+}\left(\frac{v(x)}{v_{Q}}\right)\right)v(x)dx\\
 & =\frac{v(Q)}{|Q|}+\frac{1}{|Q|}\int_{Q}\log^{+}\left(\frac{\alpha_{n}^{2}v(x)}{\alpha_{n}^{2}v_{Q}}\right)v(x)dx\\
 & \leq3\frac{v(Q)}{|Q|}\log^{+}(\alpha_{n})+\frac{1}{|Q|}\int_{Q}\log^{+}\left(\frac{v(x)}{\alpha_{n}^{2}v_{Q}}\right)v(x)dx\\
 & \leq3\log^{+}(\alpha_{n})\frac{v(Q)}{|Q|}+\nu_{n}\,X\frac{Y(Q)}{|Q|}\\
 & \lesssim_n\frac{Y(Q)}{|Q|}X,
\end{split}
\]
and this yields 
\[
\frac{1}{Y(Q)}\int_{Q}M(\chi_{Q}v)dx\lesssim_n X.
\]
Now, to prove \eqref{eq:Target1} we note first that
\[
\frac{1}{|Q|}\int_{Q}\log^{+}\left(\frac{v(x)}{\alpha_{n}^{2}v_{Q}}\right)v(x)dx=\frac{1}{|Q|}\int_{L_{Q}}\log^{+}\left(\frac{v(x)}{\alpha_{n}^{2}v_{Q}}\right)v(x)dx
\]
where $L_{Q}=$ $\{x\in Q: v(x)\geq \alpha_{n}^{2}\,v_{Q}\}$ and $\alpha_n$ is to be chosen. Hence, settling
(\ref{eq:Target1}) is equivalent to prove that 

\begin{equation}\label{Ainfty}
\frac{1}{|Q|}\int_{L_{Q}}\log^{+}\left(\frac{v(x)}{\alpha_{n}^{2}v_{Q}}\right)v(x)dx\ \leq\gamma_{n}X\frac{Y(Q)}{|Q|} 
\end{equation}
for appropriate constants $\alpha_{n},\gamma_{n}\geq1$. 

Now, recall that if $w\in A_{1}$ then $\log(w)\in\BMO$. Furthermore, tracking the dependence on the $A_1$ constant in \cite[Theorem 3.3 p. 157]{GCRdF}
\[
\|\log(w)\|_{\strt{1.7ex} \BMO}\leq  2\log([w]_{A_{1}}).
\]
Hence, in particular, if we choose 
$$w=M\left(\frac{v\chi_{Q}}{v_Q}\right)^{1/2},$$
it is well known (see for instance \cite[Theorem 3.4 p. 158]{GCRdF}) that $w\in A_{1}$ with 
$$[w]_{A_{1}}\leq d_{n}.$$
Hence,  if we let 
$$b=\log w$$ 
there exists a constant a dimensional constant $\rho_{n}$, i.e. independent of $Q$, such that 
\[
\left\Vert b \right\Vert _{\BMO}\leq\rho_{n}.
\]
Observe that although $b$ depends on $Q$ its BMO constant is just dimensional. Combining this estimate with \eqref{eq:hyp}

\begin{eqnarray}
 &  & \frac{1}{Y(Q)}\int_{Q}\left|b  -  b_{Q}\right|v(x)dx\label{bmo}
\leq X\left\Vert b  \right\Vert _{\BMO}\leq X\rho_{n},\nonumber 
\end{eqnarray}
namely 
\[
\avgint_{Q} \left|b  -  b_{Q}\right| v(x)dx\leq X\rho_{n}\frac{Y(Q)}{|Q|}.
\]
In view of the preceding estimate, to settle (\ref{Ainfty}), and
hence ending the proof, it suffices to check that for every $x\in L_{Q}$
for $\alpha_{n}>1$ to be chosen we have that
\begin{equation}\label{claim}
\left|b(x)  -  b_{Q}\right| \geq \frac{1}{2}\log^{+}\left(\frac{v(x)}{\alpha^2_{n}v_{Q}}\right)
\end{equation}
To verify this we observe first that combining Jensen's inequality and Kolmogorov's inequality with dimensional constant $c_n=2\|M\|^{\frac{1}{2}}_{L^1(\mathbb{R}^n)\rightarrow L^{1,\infty}(\mathbb{R}^n)}$  (see \cite[Ex. 2.1.5 p. 100]{GrafakosCF3rd}), we have
$$b_{Q} = \avgint_{Q} \log w = \avgint_{Q} \log \left(\frac{M(v\chi_{Q})}{v_Q} \right)^{1/2} \leq 
\log \left[\avgint_{Q} \left(\frac{M(v\chi_{Q})}{v_Q} \right)^{1/2} \right]\leq \log c_n. 
$$ 
If we further assume that $x\in L_{Q}$, namely that $v(x)\geq \alpha_n^2 v_Q$, where $\alpha_n$  is yet to be chosen, we have 
$$b_{Q}\leq \log c_n \leq \log \left(c_n \frac{v(x)^{1/2}}{\alpha_n (v_Q)^{1/2}}\right) \leq \log \left(\frac{M(v\chi_Q)(x)^{1/2}}{ (v_Q)^{1/2}}\right)\leq  \log w=b
$$ 
choosing $\alpha_n = c_n$. Hence, for these $x\in L_{Q}$
\[
\begin{split}
\left|b(x)  -  b_{Q}\right| &= b(x)  -  b_{Q} \geq b(x)  - \log c_n = \log \left(\frac{w(x)}{c_n}\right)\\
&= \frac{1}{2} \log\left[ \frac{1}{c^2_n}\, \left(\frac{M(v\chi_{Q})}{v_{Q}}\right)\right]
\geq \frac{1}{2} \log\left[ \frac{1}{c^2_n}\, \left(\frac{v(x)}{v_{Q}}\right)\right]. 
\end{split}
\]
This ends the proof of \eqref{claim} and hence the proof of \eqref{Ainfty} with $\alpha_{n}=c_{n}$
and $\gamma_{n}=2\rho_{n}$.

\subsection{Proof of Theorem  \ref{thm:GenAssymBMO}} 
In this section we will present two different approaches for Theorem \ref{thm:GenAssymBMO}. One of them is based in some new ideas from \cite{PR-Poincare} involving self improving properties for smallness preserving functionals related to generalized Poincar\'e inequalities. This was in fact inspired by the  proof of John-Nirenberg's lemma given in \cite{Journe}. Relying upon this approach we will establish the following inequality

\begin{equation}\label{eq:P(p,p)-CZ}
\left( \frac{1}{ Y(Q)  } \int_{ Q }   |f -f_{Q}|^p     \,wdx\right)^{\frac{1}p}  \, \leq  c_n\, p q\,\max\left\{1, \beta_Y^{q}\right\}\|f\|_{\BMO},
\end{equation}
Even though that the estimate above doesn't provide the best dependence on $\beta_Y$ we have included its proof for the sake of the interest of the approach used.

The other proof that we will present here can be seen as a very interesting application of the so called \emph{sparse approach} for studying singular integrals. 
We remark that in this BMO type estimate this approach provides a linear bound in terms of  $\beta_Y$, namely, we will prove that
\begin{equation*}
\left( \frac{1}{ Y(Q)  } \int_{ Q }   |f -f_{Q}|^p     \,wdx\right)^{\frac{1}p}  \, \leq  c_n\, p q\,\beta_Y\|f\|_{\BMO}.
\end{equation*}
Unfortunately, this method does not work so precisely in the general scenario of generalized Poincar\'e inequalities   

\subsubsection{Proof based on the smallness property}

As we announced above, in this section we will settle \eqref{eq:P(p,p)-CZ}. By homogeneity we may assume that $\|f\|_{\BMO}=1$,
\begin{equation}
\frac{1}{|Q|}\int_{Q} |f-f_{Q}| \le 1. \label{eq:UnWeightedStartingPointL1}
\end{equation}

We may assume that $f$ is bounded. 
Fixed  one cube $Q$.  We can consider the local Calder\'on-Zygmund decomposition  of $|f-f_{Q}|$ relative to $Q$ at level $L$ on $Q$ for a large universal 
constant $L>1$ to be chosen. Let $\mathcal{D}(Q)$ be the family of dyadic subcubes of $Q$. The Calder\'on-Zygmund (C-Z) decomposition yields a collection $\{Q_{j}\}$ of cubes such that $Q_j\in \mathcal{D}(Q)$, maximal with respect to inclusion, satisfying

\begin{equation}\label{eq:CZ1}
L  < \frac{1}{|Q_{j}|  }\int_{Q_{j}} |f-f_{Q}| \, dy. 
\end{equation}
Then, if $P$ is dyadic with $P \supset Q_j$ 
\begin{equation}\label{eq:CZ2}
\frac{ 1 }{|P|  }
\int_{P} |f-f_{Q}| \, dy   \leq L
\end{equation}
and hence 
\begin{equation}\label{eq:CZ3}
L  < \frac{ 1 }{|Q_{j}|  }
\int_{Q_{j}} |f-f_{Q}| \, dy \leq L\,2^{n}
\end{equation}
for each integer $j$.  Also note that 
$$  \left \{x\in Q: M_Q^d\left ( |f-f_{Q}|\chi_{  Q }\right )(x) > L   \right \} = \bigcup_{j}Q_j=:\Omega_L
$$
where $M^d_Q$ stands for the dyadic maximal function adapted to the cube $Q$. That is, 
$$
M^d_Q(f)(x):=\sup_{P\ni x}\avgint |f(y)|\ dy\qquad x\in Q, P\in \mathcal{D}(Q)
$$
Then, by the Lebesgue differentiation theorem it follows that
$$|f-f_{Q}| \leq L   \qquad              a.e. \ x \notin \Omega_L
$$

Also, observe that by \eqref{eq:CZ1} (or the weak type $(1,1)$ property of $M$) and recalling our starting assumption \eqref{eq:UnWeightedStartingPointL1}, we have that $\{Q_i\}\in S(L)$, namely
\begin{equation*}\label{eq:CZ4a}
|\Omega_L|=\left|\bigcup_{j}Q_j \right| < \frac{|Q|}{L}.
\end{equation*}

Now, given the C-Z decomposition of the cube $Q$, we perform the classical C-Z of the function $f-f_{Q}$ as
\begin{equation}\label{eq:CZ-f-fQ}
f-f_{Q}=g_Q+b_Q,
\end{equation}
where the functions $g_Q$ and $b_Q$ are defined as usual. We have that

\begin{equation}\label{eq:gQ}
g_Q(x) = \left \{
\begin{array}{ccc}
f-f_{Q}, &  &  x \notin \Omega_L \\
&&\\
\displaystyle \avgint_{Q_i}(f-f_{Q}), &  &  x\in \Omega_L,  x\in Q_i
\end{array}
\right .
\end{equation}
Note that this definition makes sense since the cubes $\{Q_i\}$ are disjoint, so any $x\in \Omega_L$ belongs to only one $Q_i$. Also note that condition \eqref{eq:CZ3} implies that 
\begin{equation}\label{eq:gQ-bounded}
g_Q(x)\le 2^nL 
\end{equation}
for almost all $x\in Q$. The function $b_Q$ is determined by this choice of $g_Q$ as the difference 
\begin{equation*}
b_Q= f-f_{Q} -g_Q,
\end{equation*}
but we  also have a representation as
\begin{equation}\label{eq:bQ}
b_Q(x)=\sum_i \left (f(x)-f_{Q_i}\right )\chi_{Q_i}(x)=\sum_i b_{Q_i},
\end{equation}
where $b_{Q_i}=(f(x)-f_{Q_i})\chi_{Q_i}(x)$.

Now we start with the estimation of the desired $L^p$ norm from \eqref{eq:P(p,p)-CZ}. Consider on $Q$ the measure $\mu$ defined by $d\mu=\frac{w }{Y(Q)}\chi_{Q}$. Then, by the triangle inequality, we have
\begin{eqnarray*}
\left( \frac{1}{ Y(Q)  } \int_{ Q }   |f -f_{Q}|^p   \,wdx\right)^{\frac{1}{p} } & \le & \|g_Q\|_{L^p(\mu)}+\|b_Q\|_{L^p(\mu)}\\
& \le & 2^nL + \left (\dfrac{1}{Y(Q)}\int_{\Omega_L}\sum_j |b_{Q_j}|^p \, w dx \right )^{1/p}\\
\end{eqnarray*}
since  we assume $ w(Q)\leq    Y(Q).$

Let us observe that the last integral of the sum, by the localization properties of the functions $b_{Q_i}$, can be controlled:
\begin{eqnarray*}
\int_{\Omega_L} |\sum_j b_{Q_j} |^p \, w dx  & \le & \sum_j\int_{Q_j}\left | b_{Q_j}\right |^p \, w dx\\
& = & \sum_j\frac{Y(Q_j)}{Y(Q_j)}\int_{Q_j}\left |f-f_{Q_j}\right |^p \, w dx \\
& \le & X^p \sum_j  Y(Q_j),
\end{eqnarray*}
where $X$ is the quantity defined by
$$
X=\sup_Q \left (\frac{1}{Y(Q)}\int_{Q} |f-f_{Q}|^p \, w dx \right )^{1/p},
$$
which is finite since we are assuming that  $f$ is bounded.
Then we obtain that
\begin{eqnarray*}
\left( \frac{1}{ Y(Q)  } \int_{ Q }  |f -f_{Q}|^p  \,wdx\right)^{\frac{1}{p} } & \le & 2^nL + X\left ( \dfrac{\sum_i Y(Q_i) }{Y(Q)}\right )^{1/p}\\
& \le &2^n L +  X\, (\frac{\beta_Y}{L^{1/q}})^{1/p}
\end{eqnarray*}
by the smallness preserving hypothesis. This holds for every cube $Q$, so taking the supremum we obtain
$$
X\le 2^n L +  (\frac{\beta_Y}{L^{1/q}})^{1/p} X.
$$
Now we choose $L=2e\max\left\{\beta_Y^{q},1\right\}$ so the above inequality yields
$$
X\leq 2^n 2e\max\left\{\beta_Y^{q},1\right\}\, \left ((2e)^{1/pq}\right )'\le e2^{n+2}\,pq\,\max\left\{\beta_Y^{q},1\right\},
$$
using that $\left ((2e)^{1/s}\right )'\le 2s$, $s>1$. This is the desired inequality \eqref{eq:P(p,p)-CZ}:
$$
\left( \frac{1}{ Y(Q)  } \int_{ Q } |f -f_{Q}|^p \,wdx\right)^{\frac{1}{p} }\le c_n pq \max\left\{\beta_Y^{q},1\right\}.
$$

\subsubsection{Proof based on the sparse approach}
First we need to prove that the smallness condition implies suitable sparse conditions for functionals.

\begin{lemma}
Let $q>1$ and $Y\in \mathcal{Y}_q$  be a functional. 
Let $\mathcal{F}\subset\mathcal{D}(Q)$ be a family
of cubes. If there exists $L>1$ such that for every $P\in\mathcal{F}$ 
\[
\sum_{R\in\mathcal{F},R\subsetneq P,R\text{ maximal}}|R|\leq\frac{1}{L}|P|
\]
then
\[
\sum_{P\in\mathcal{F}}Y(P)\leq\kappa Y(Q)
\]
where $\kappa=\beta_Y\sum_{k=0}^{\infty}\frac{1}{L^{\frac{k}{q}}}$
\end{lemma}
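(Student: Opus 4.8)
The plan is to iterate the smallness-preserving inequality \eqref{DefY(Q)} across the natural generational stratification of the family $\mathcal{F}$. First I would organize $\mathcal{F}$ into generations relative to the stopping structure: let $\mathcal{F}_0=\{Q\}$ (or the maximal cubes of $\mathcal{F}$), and having defined $\mathcal{F}_k$, let $\mathcal{F}_{k+1}$ consist of all cubes $R\in\mathcal{F}$ that are maximal among the elements of $\mathcal{F}$ strictly contained in some $P\in\mathcal{F}_k$. Since every cube of $\mathcal{F}$ arises exactly once in this way (each $R\in\mathcal{F}$ sits below a unique ``parent'' in $\mathcal{F}$, or is maximal), we get a disjoint decomposition $\mathcal{F}=\bigcup_{k\ge 0}\mathcal{F}_k$, so $\sum_{P\in\mathcal{F}}Y(P)=\sum_{k\ge 0}\sum_{P\in\mathcal{F}_k}Y(P)$.

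Next, the key point is a per-generation bound. Fix $P\in\mathcal{F}_k$. Its children in $\mathcal{F}_{k+1}$ are exactly the maximal elements of $\mathcal{F}$ strictly inside $P$, which are pairwise disjoint subcubes of $P$ and, by the hypothesis of the lemma, satisfy $\sum|R|\le |P|/L$, i.e. they form a family in $S_P(L)$. Therefore \eqref{DefY(Q)} applies and gives $\sum_{R\ \mathrm{child\ of}\ P}Y(R)\le \beta_Y\, Y(P)\,L^{-1/q}$. Summing over all $P\in\mathcal{F}_k$ (using that the children of distinct $P$'s are distinct cubes of $\mathcal{F}_{k+1}$, and that every cube of $\mathcal{F}_{k+1}$ is a child of exactly one $P\in\mathcal{F}_k$) yields
\[
\sum_{P\in\mathcal{F}_{k+1}}Y(P)\ \le\ \beta_Y\,L^{-1/q}\sum_{P\in\mathcal{F}_k}Y(P).
\]
Iterating from $k=0$ gives $\sum_{P\in\mathcal{F}_k}Y(P)\le (\beta_Y\,L^{-1/q})^k\,Y(Q)$.

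Finally I would sum the geometric-type series:
\[
\sum_{P\in\mathcal{F}}Y(P)=\sum_{k\ge 0}\sum_{P\in\mathcal{F}_k}Y(P)\le Y(Q)\sum_{k\ge 0}(\beta_Y)^k L^{-k/q}.
\]
To match the stated constant $\kappa=\beta_Y\sum_{k\ge 0}L^{-k/q}$ one should, at the very first step, not waste a factor: apply \eqref{DefY(Q)} to pass from $\mathcal{F}_k$ to $\mathcal{F}_{k+1}$ but bound the top term $\sum_{P\in\mathcal{F}_0}Y(P)=Y(Q)$ trivially, so that each of the remaining generations contributes a factor $\beta_Y L^{-1/q}$ only once and the extra $\beta_Y$'s beyond the first collapse because $\beta_Y\ge 1$ is used only once per ``descent,'' or — more simply — absorb all powers $\beta_Y^k$ into a single $\beta_Y$ using $\beta_Y\ge 1$ together with a slightly enlarged $L$; I expect the cleanest route is to note $\beta_Y^k\le \beta_Y \cdot \beta_Y^{k-1}$ is not quite what is wanted, so the honest statement one proves is $\kappa\le \sum_{k\ge 0}\beta_Y^k L^{-k/q}$, and the form written in the lemma follows whenever one replaces $L$ by $L\beta_Y^{q}$ in the hypothesis or simply reads $\beta_Y$ as already $\ge 1$ and factors one copy out front. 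The only genuine subtlety — and the step I would be most careful about — is the bookkeeping that the generational families $\{\mathcal{F}_k\}$ really do partition $\mathcal{F}$ and that the children of a fixed $P$ are exactly a legitimate $S_P(L)$ family; the rest is the geometric summation.
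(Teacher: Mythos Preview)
Your generational decomposition is exactly the one the paper uses, but the way you pass from generation to generation is the source of the trouble you yourself flag at the end. By applying \eqref{DefY(Q)} from each $P\in\mathcal{F}_k$ down to its children you pick up a factor $\beta_Y L^{-1/q}$ at every step, which after iteration yields $\sum_{P\in\mathcal{F}_k}Y(P)\le (\beta_Y L^{-1/q})^k Y(Q)$ and hence only the weaker bound $\sum_{P\in\mathcal{F}}Y(P)\le Y(Q)\sum_{k\ge 0}(\beta_Y L^{-1/q})^k$. This is not merely a cosmetic mismatch with the stated $\kappa=\beta_Y\sum_{k\ge 0}L^{-k/q}$: if $\beta_Y\ge L^{1/q}$ your series diverges and the argument collapses, whereas the lemma is claimed for all $L>1$ with no restriction on $\beta_Y$. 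The attempts in your last paragraph to ``absorb'' the extra powers of $\beta_Y$ do not work without changing the hypothesis.

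The missing idea is to iterate the \emph{Lebesgue-measure} smallness first and apply \eqref{DefY(Q)} only once, relative to the ambient cube $Q$. Since the cubes in each $\mathcal{F}_k$ are pairwise disjoint, the hypothesis gives
\[
\sum_{P\in\mathcal{F}_k}|P|\le \frac{1}{L}\sum_{P'\in\mathcal{F}_{k-1}}|P'|\le\cdots\le \frac{|Q|}{L^k},
\]
so $\mathcal{F}_k\in S_Q(L^k)$. Now a single application of \eqref{DefY(Q)} (with $Q$ as the ambient cube and smallness parameter $L^k$) yields $\sum_{P\in\mathcal{F}_k}Y(P)\le \beta_Y\,L^{-k/q}\,Y(Q)$, with just one factor of $\beta_Y$. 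Summing in $k$ gives precisely $\kappa=\beta_Y\sum_{k\ge 0}L^{-k/q}$. This is exactly the route the paper takes.
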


\begin{proof}
We observe that 
\[
\sum_{P\in\mathcal{F}}Y(P)=\sum_{k=0}^{\infty}\sum_{P\in\mathcal{F}_{k}}Y(P)
\]
where $\mathcal{F}_{0}=\{Q\}$, $\mathcal{F}_{k}=\{P\subsetneq R\,:\,P\in\mathcal{F},R\in\mathcal{F}_{k-1},P\text{ maximal }\}$.
We note that, taking into account the properties of $\mathcal{F}$,
\[
\sum_{P_{j}^{1}\in\mathcal{F}_{1}}|P_{j}^{1}|\leq\frac{1}{L}|Q|
\]
\[
\sum_{P_{j}^{2}\in\mathcal{F}_{2}}|P_{j}^{2}|\leq\frac{1}{L}\sum_{P_{j}^{1}\in\mathcal{F}_{1}}|P_{j}^{1}|\leq\frac{1}{L^{2}}|Q|
\]
and in general 
\[
\sum_{P_{j}^{k}\in\mathcal{F}_{k}}|P_{j}^{k}|\leq\frac{1}{L^{k}}|Q|.
\]
Then, since $Y\in \mathcal{Y}_q$,
\[
\sum_{k=0}^{\infty}\sum_{P\in\mathcal{F}_{k}}Y(P)\leq \beta_Y\, \sum_{k=0}^{\infty}\left(\frac{1}{L^{k}}\right)^{\frac{1}{q}}Y(Q)
\]
and the desired estimate holds with $\kappa = \beta_Y\,\sum_{k=0}^{\infty}\frac{1}{L^{\frac{k}{q}}}.$
\end{proof}

We may assume, without loss of the generality that the sparse family in Lemma \ref{LemBabyLerner} satisfies a sparseness property as the one in the preceding Lemma with $L=2$.  We remit the reader to \cite[Section 6]{LNDy} for the equivalence between Carleson families, as the ones in the lemma, and sparse families and futher details. Taking that into account we can proceed as follows. Let $k$ be the only non-negative integer such that $k<p\leq k+1$. Then, since $\frac{w(Q)}{Y(Q)} \leq 1$
\begingroup
\allowdisplaybreaks
\begin{align*}
 & \left(\frac{1}{Y(Q)}\int_{Q}|f-f_{Q}|^{p}w\right)^{\frac{1}{p}}\leq \left(\frac{1}{Y(Q)}\int_{Q}|f-f_{Q}|^{k+1}w\right)^{\frac{1}{k+1}}\\
 & \leq c_{n}\left(\frac{1}{Y(Q)}\int_{Q}\left(\sum_{P\in\mathcal{F}}\frac{1}{|P|}\int_{P}|f-f_{P}|\chi_{P}(x)\right)^{k+1}w\right)^{\frac{1}{k+1}}\\
 & \leq c_{n}\|f\|_{\BMO}\left(\frac{1}{Y(Q)}\int_{Q}\left(\sum_{P\in\mathcal{F}}\chi_{P}(x)\right)^{k+1}w(x)dx\right)^{\frac{1}{k+1}}\\
 & \leq c_{n}\|f\|_{\BMO}\left((k+1)!\right)^{\frac{1}{p}}\left(\frac{1}{Y(Q)}\sum_{\stackrel{{\scriptstyle P_{k+1}\subseteq P_{k}\subseteq\dots\subseteq P_{1}\subseteq Q}}{P_{i}\in\mathcal{F}}}\int_{Q}\chi_{P_{1}}\chi_{P_{2}}\cdots\chi_{P_{k}}\chi_{P_{k+1}}w\right)^{\frac{1}{k+1}}\\
 & \leq c_{n}\|f\|_{\BMO}\left((k+1)!\right)^{\frac{1}{k+1}}\left(\frac{1}{Y(Q)}\sum_{\stackrel{{\scriptstyle P_{k+1}\subseteq P_{k}\subseteq\dots\subseteq P_{1}\subseteq Q}}{P_{i}\in\mathcal{F}}}w(P_{k+1})\right)^{\frac{1}{k+1}}\\
 & \leq c_{n}\|f\|_{\BMO}\left((k+1)!\right)^{\frac{1}{k+1}}\left(\frac{1}{Y(Q)}\sum_{\stackrel{{\scriptstyle P_{k+1}\subseteq P_{k}\subseteq\dots\subseteq P_{1}\subseteq Q}}{P_{i}\in\mathcal{F}}}Y(P_{k+1})\right)^{\frac{1}{k+1}}\\
 & \leq c_{n}\|f\|_{\BMO}\left((k+1)!\right)^{\frac{1}{k+1}}\left(\frac{1}{Y(Q)}\kappa_{q}\sum_{\stackrel{{\scriptstyle P_{k}\subseteq P_{k-1}\subseteq\dots\subseteq P_{1}\subseteq Q}}{P_{i}\in\mathcal{F}}}Y(P_{k})\right)^{\frac{1}{k+1}}\\
 & \dots\\
 & \leq c_{n}\|f\|_{\BMO}\left((k+1)!\right)^{\frac{1}{k+1}}\left(\frac{1}{Y(Q)}\kappa_{q}^{k+1}Y(Q)\right)^{\frac{1}{k+1}}\\
 & \leq c_{n}\|f\|_{\BMO}\left((k+1)!\right)^{\frac{1}{k+1}}\kappa_{q}
\end{align*}
\endgroup
where $\kappa_{q}=\beta_Y\sum_{k=0}^{\infty}\frac{1}{2^{k/q}}$.  Finally, a simple computation, using elementary Stirling estimates for $(k+1)!$, shows that this final inequality yields the desired estimate: 
\begin{equation*}
\left(\frac{1}{Y(Q)}\int_{Q}|f-f_{Q}|^{p}w\right)^{\frac{1}{p}} \le c_n p\,q\,\beta_Y\,\|f\|_{\BMO}.
\end{equation*}

\section{Proof of Theorem \ref{ThmBloomBMO}}

We rely upon  Lemma \ref{LemBabyLerner} to provide the proof of Theorem \ref{ThmBloomBMO}.
\begin{proof}
We start with the second estimate
\begingroup
\allowdisplaybreaks
\begin{align*} 
& \left(\frac{1}{w(Q)}\int_{Q}\left|\frac{b(x)-b_{Q}}{w}\right|^{p'}w(x)dx\right)^{\frac{1}{p'}}\\
 & \leq2^{n+2}\left(\frac{1}{w(Q)}\int_{Q}\left(\sum_{P\in\mathcal{S},P\subseteq Q}\frac{w(P)}{|P|}\frac{1}{w(P)}\int_{P}|b-b_{P}|\chi_{P}(x)\right)^{p'}\sigma(x)dx\right)^{\frac{1}{p'}}\\
 & \leq2^{n+2}\|b\|_{\BMO_{1,w}}\left(\frac{1}{w(Q)}\int_{Q}\left(\sum_{P\in\mathcal{S},P\subseteq Q}\frac{w(P)}{|P|}\chi_{P}(x)\right)^{p'}\sigma(x)dx\right)^{\frac{1}{p'}}\\
 & \leq c_{n}\|b\|_{\BMO_{1,w}}\left(\frac{(k+1)!}{w(Q)}\sum_{\stackrel{\scriptstyle{P_{i}\in\mathcal{S}}}{P_{k+1}\subseteq P_{k}\subseteq\dots\subseteq P_{1}\subseteq Q}}\left(\prod_{j=1}^k\langle w\rangle_{P_{j}}\right)\left(\langle w\rangle_{P_{k+1}}\right)^{p'-k}\sigma(P_{k+1})\right)^{\frac{1}{p'}}
\end{align*}
\endgroup
taking into account \cite[Lemma 5.1]{HRem} and choosing $k$ to be the unique integer such that $k\leq p'<k+1$.  A careful tracking of the constants involved in the argument in \cite[p. 102]{HRem} allows us to obtain the
desired result.

For the first estimate it suffices to notice that if $q>1$ then,  $[w]_{A_{q'}}\leq [w]_{A_1}$. Then using the computations above we are done.
\end{proof}

\bibliographystyle{amsalpha}

\end{document}